
\documentclass{birkjour}
%
%
%
 \newtheorem{thm}{Theorem}[section]
 \newtheorem{cor}[thm]{Corollary}
 \newtheorem{lem}[thm]{Lemma}
 \newtheorem{prop}[thm]{Proposition}
 \theoremstyle{definition}
 
 \theoremstyle{remark}
 \newtheorem{rem}[thm]{Remark}
 
 \numberwithin{equation}{section}

\begin{document}

%
%
%
%
%
%
%
%
%

\title[Generalised Poisson Transform of the hyperbolic space $B({\mathbb{H}}^{n})$]
 {Characterisation of the ${L}^{p}$ Range of the Generalised Poisson Transform of the hyperbolic space $B({\mathbb{H}}^{n})$}

\author[Imane Ghanimi]{Imane Ghanimi.}

\address{Department of Mathematics\br
Faculty of sciences\br
University Ibn Tofail\br
Kenitra, Morocco}
\email{ghanimiimane@gmail.com}

\thanks{}

\subjclass{Primary 22E46; Secondary 33Cxx}

\keywords{Poisson transform, spherical functions, Special functions}

\date{September 19, 2017}
\dedicatory{In memory of Professor Ahmed Intissar}

\begin{abstract}
The aim of this paper is to give the characterisation of the ${L}^{p}$ Range $(p\geq 2)$  of the Generalised Poisson Transform of the Hyperbolic space $B({\mathbb{H}}^{n}),(n\geq 2) $, over the classical field of the quaternions $\mathbb{H}$.
Namely, if $f$ is an hyperfunction in the boundary of $B({\mathbb{H}}^{n})$, then we show that $f$ is in ${L}^{p}(\partial B({\mathbb{H}}^{n}))$  if and only if it's generalised poisson transform satisfy an  Hardy type growth condition. An explicit expression of the generalized spherical functions is given.
\end{abstract}

\maketitle

\section{Introduction }

Riemannian symmetric spaces of non compact type $ G/K $  are of great importance for various branches of mathematics. Morever they give an interesting connection between the theory of group representations and Special functions. The investigation of this relation gave a powerful push of the development of physics.\\

Harmonic analysis on non compact riemannian symmetric spaces was first developped in the 1950's with harish chandra. It's has been next intesively studied by many people, namely by Helgason and it is nowaday well inderstood. It becomes then natural to extend the theory to homogeneous vector bundle over   $ G/K $.\\

Many autors examined this theory and tried to formulate a generalization of the whole harmonic analysis over $ G/K $, while other people have intensively studied some special contexts of this theory: main tools are the theory of spherical functions, Fourier and Poisson transform.\\

Let $ X $ denote a non compact Riemannian symmetric space of rank one. Then $ X $ can  be identified with $ G/K $ where $ G $ is a connected noncompact semi simple Lie group with finite center having real rank one, and $ K $ is a maximal compact subgroup of $ G $. An irreductible unitary representation $\tau $ of $ K $ is canonically associated to a vector bundle over $ E_{\tau} $ over $G/K  $.\\

In this work, we restrict ourselves to the case where $ X $ is the quaternionic hyperbolic space. This framework was previously treated in [1],  the autors gived a complet treatment of the spherical transform on vector bundle over $ X $. The aim of this paper is to give a characterisation of the generalized poisson transform related to an irreductible representation $\tau_{l} $ of $ K $ using $\tau_{l} $-spherical functions, and $\tau_{l} $-generalized spherical functions.\\
 
Let's now describe in more details the background of this paper. As hyperbolic space, it is known that the boundary of $ X$ is representation of degenerate principal series; the generalized poisson transform is an intetwiting operator from representation of degenerate principal series to the space of "eigensections" of the vector bundle over $G/K $. The spherical functions arises then as a generalized poisson transfrom of a particular function in the boundary of $ X $. Following the techniques of Takahashi in [3] we give the expression of these spherical functions as hypergeometric function, which allows as to give the proof of the necessary condtion of our main theorem.\\

Next we recall the decomposition due to Peter-Weyl $ {L}^{2}(\partial B(\mathbb{H}^{n}))= \sum_{(p,q) \in {V}_{p,q}}{H}_{p,q} $, the zonal spherical harmonics that spans $ {H}_{p,q} $ are given in [5]; then we define an operator $ P_{\lambda,l,p,q} $ from each $ {H}_{p,q} $ to ${L}^{2}(\partial B(\mathbb{H}^{n}))  $, and by Schur's lemma we conclude that the operator $ P_{\lambda,l,p,q}  $ is scalar on each composant $ {H}_{p,q} $. This leads to the existence of the generalized spherical functions: Poisson transform of the zonal spherical harmonics.\\

Using some computational methods for special functions, we are able to give explictly the expression of these generalized spherical functions. Indeed, they arise as a linear combination of two contiguous hypergeometric functions. The main difficulty in the proof of our main theorem, after the explicit computation of the generalized spherical functions, is to find their asymptotic behaviour. This problem has been resolved in [2]. This allows as to prove the sufficient condition of our main theorem for the case $ \textbf{p}=2 $. The case $ \textbf{p}> 2 $ is given by an inversion formula.\\

\section{Notations, preliminaries and statement of the main results}

Let $n\geq 2$ be an integer, and let $\mathbb{H}$ be the classical field of the quaternions. ${\mathbb{H}}^{n+1} $ being considered as left vector space, let's define the following Lorentz form $ L $ on $ {\mathbb{H}}^{n+1} $:\\

$ L(x,y)= \overline{y_{1}}x_{1}+ \overline{y_{2}}x_{2}+...-\overline{y_{n+1}}x_{n+1}$ where $ \overline{y_{i}} $ is the quaternionic conjugate of $ \overline{y_{i}} $.

 Let's consider $B({\mathbb{H}}^{n})$ be the bounded realisation of the Hyperbolic space $X := G/K$, where:\\

$G = Sp(n,1)$ is the subgroup of $ GL(n+1,{\mathbb{H}}) $ preserving $ L $, and \\
$K =\lbrace k = \left(
                                \begin{array}{cc}
                                  A & 0 \\
                                  0 & D \\
                                \end{array}
                              \right)
;A \in Sp(n), D \in Sp(1)\rbrace$ \\

Recall that then $ G $ is a connected non compact semi-simple real Lie group with finite center and $ K $ is a maximal compact subgroup of $ G $. The quaternionic hyperbolic space $X := G/K$ can be realized as a non compact Riemannian space of rank one and of real dimension $ 4n $.\\

But we know that $Sp(1)$  is canonically isomorphic to $SU(2)$, then the set of equivalence classes of unitary irreductible representations of $Sp(1)$ is parametrized by the set of non negative integers  $\mathbb{N}/2$  ;\\

Lets denotes by ${\tau}_{l}$ the equivalence classe corresponding to the parameter $l$ and let define $\tau(k):= \tau_{l}(D)$ for every $k$ in $K$;\\

Thus, $\tau_{l}$ is a unitary irreductible representation of $Sp(1)$ in a hilbert space ${V}_{l}$ of dimension $dim{V}_{l}= 2l+1$, and $\tau_{l}$ can be extended to a representation of $K$ by setting $\tau_{l}\equiv 1$ on $Sp(n)$;\\

 Furthermore, each $\tau_{l}$ is self-dual, it follows that the character of $\tau_{l}$, ${\chi}_{l}=tr({ \tau_{l}})$ satisfies : ${\chi}_{l}({k}^{-1})={\chi}_{l}({k})$ for every $k$ in $K$;\\
 
 And we also have: ${\chi}_{l}(q/|q|)={C}^{1}_{2l}(Re(q/|q|)= sin(2l+1)\theta/sin\theta
$, where $q:=r(cos\theta+ysin\theta) \in \mathbb{H}$\\

$\tau_{l}$ is canonically related to an homogeneous vector bundle $ E^{\tau} = G \times_{K}V_{l} $
and the space of sections of $ E^{\tau} $ can be identified with the space:\\
$ \Gamma(G, \tau) = \lbrace F: G \rightarrow V_{l}, F(gk) = \tau(k)^{-1}F(g), \forall g\in G, \forall k\in K \rbrace$\\

 let's denote:\\
  $ \mathcal{C}^{\infty}(G, \tau)= \Gamma(G, \tau)\cap \mathcal{C}^{\infty}(G, V_{l})$ , $ \mathcal{C}^{\infty}(G, V_{l}) $ being the space of $ \mathcal{C}^{\infty} $ functions $ F: G \rightarrow V_{l} $.\\

Recall that we have: $ L^{2}(G, \tau)\simeq \lbrace L^{2}(G)\bigotimes V_{l}\rbrace ^{K} $ where the upper index  $ K $ means that we take the subspace of $ K $-invariant vectors for the right action of $ K $ on $ L^{2}(G) $ and the induced bundle  $ L^{2}(G, \tau) $ may sit in $  L^{2}(G) $ in a naturel way.[4][6]\\

Let $ \mathfrak{a} $ be a maximal abelian subspace in $ \mathfrak{p} $ and let $ \mathfrak{m}\simeq sp(n-1)\bigoplus sp(1) $ be the centralizer of $ \mathfrak{a} $ in $ \mathfrak{k} $ then we have the Iwasawa decomposition :\\
$ \mathfrak{g}= \mathfrak{k}\bigoplus \mathfrak{a} \bigoplus \mathfrak{n} $ and we also have the identification: $ \mathfrak{a}^{\star} = \mathbb{R} $, and let $ \mathfrak{g} = sp(n,1) $ and $ \mathfrak{k}= sp(n)\bigoplus sp(1) $ be the Lie algebras of $ G $ and let $  \mathfrak{g} = \mathfrak{k}\bigoplus \mathfrak{p}$ be the Cartan decomposition.\\

Let's denote by $\rho$ the half sum of positive roots of the pair $(\mathfrak{g},\mathfrak{a})$. For instance, $\rho = 2n+1$;\\

let $ P:= MAN $ be the the standard minimal parabolic subgroup of $ G $ where $ A $, $ M $, $ N $ are the analytics subgroups related respectively to $ \mathfrak{a} $, $ \mathfrak{m} $ and $ \mathfrak{n} $, and for $ t \in \mathbb{R} $ set $ A = \lbrace a_{t}, t \in \mathbb{R}\rbrace$; For each $  g = k a_{t} n \in G $  , we'll denote $ \kappa(g)= k $ and $ t=  t(g) $
 \\

Let's denote by  $ \partial B({\mathbb{H}}^{n})= G/P \simeq K/M $ the boundary of $ G/K $\\

Now we are ready to define a generalized Poisson transform referring the reader to [7],[9] for more general framework.\\

For all $ \lambda \in \mathbb{C} $ define the representation $ \sigma_{\lambda,l} $ of $ P = MAN $ by:\\

  $ \sigma_{\lambda,l}(m a_{t} n) = e^{-(\lambda-\rho)t}\tau_{l}(m) $\\
 
Now let's consider the representation $ \pi_{\lambda,l} = Ind^{G}_{P}(\sigma_{\lambda,l}) $ the representation of $ G $ induced by the character $ \sigma_{\lambda,l} $; The representation $ \pi_{\lambda,l} $ is realized in $\mathcal{H}_{\lambda,l}  $ ,the Hilbert completion  of the space:\\

$ \mathcal{H}_{\lambda,l}^{\infty} = \lbrace F: G\rightarrow V_{l}, \mathcal{C}^{\infty}, F(gma_{t}n) = e^{(\lambda-\rho)t}\tau_{l}(m^{-1})  F(g),\\
 \forall g \in G, t \in \mathbb{R}, n\in N \rbrace $ \\

$\mathcal{H}_{\lambda,l}^{\infty}  $ may be identified with the space:

  $ \mathcal{C}^{\infty}(G/P,l) = \lbrace f  : G\rightarrow \mathbb{C},  \mathcal{C}^{\infty}, f(g)= 1/d_{l}\int_{M}f(gm)\chi_{l}(m)dm,\\
   f(ga_{t}n) = e^{(\lambda - \rho)t} f(g), \ \forall g \in G, t \in \mathbb{R}, n\in N\rbrace $ \\
  
 Then $\pi_{\lambda,l} $ acts on the Hilbert space :\\
 
  $ \mathcal{H}_{\lambda,l} = \lbrace F: G\rightarrow V_{l},
  \mathcal{C}^{\infty},   F(gma_{t}n) = e^{(\lambda-\rho)t}\tau_{l}(m^{-1})  F(g), \forall g \in G, t \in \mathbb{R}, n\in N, F/_{K} \in L^{2}(K)  \rbrace $ by left translations.\\
   
As $ K $-module, $\mathcal{H}_{\lambda,l}  $ is identified with  $ L^{2}(K, \sigma)$ : the space of $ L^{2} $ functions on $ K $ of right type $ \sigma := \tau_{l}/M $; more precisely, the restriction map $f \mapsto f/_{K } $ induces a bijective isometry from $\mathcal{H}_{\lambda,l}  $ to the space $ L^{2}(K, \sigma)$.\\

Morever, $\pi_{\lambda,l} $ acts on $ L^{2}(K, \sigma)$ by the rule: $ \pi_{\lambda,l}(f)(k)= e^{(\lambda-\rho)t(g^{-1}k})f(\kappa(g^{-1}k)) $ \\

For $ g \in G $ put:    $ \mathtt{P}_{\lambda,l}(g):= e^{-(\lambda+\rho)t(g)}\chi_{l}(\mathit{\kappa}(g))  $ \\

For $ f \in \mathcal{C}^{\infty}(G/P,l) $ the generalized Poisson transform is defined as follows:
  $ P_{\lambda,l}f(g):= \int_{K} \mathtt{P}_{\lambda,l}(k^{-1}gk)f(k)dk $\\
    
which gives:

     $ P_{\lambda,l}f(g)= \int_{K} e^{-(\lambda+\rho)t(g^{-1}k)}\chi_{l}(\kappa(g^{-1}k)k^{-1})f(k)dk $\\
     
Denote by $ \mathcal{A}'(\partial B(\mathbb{H}^{n})$ the space of hyperfunctions on the boundary $ \partial B(\mathbb{H}^{n})$ of $B(\mathbb{H}^{n})$.\\
 
For $\lambda\in \mathbb{C}$ a straightforward computation shows that the generalised Poisson transform  may be realized in   $\mathcal{A}'(\partial B(\mathbb{H}^{n}))$ as follows:

\begin{equation*}
{P}_{\lambda,l}(f)(x)= \int_{\partial B({\mathbb{H}}^{n})}(\frac{1- {|x|}^{2}  }{{|1-<x,\omega> |}^{2}})^{{\frac{i\lambda + 2n+1}{2}}}  {\chi}_{l}{(\frac{1-<x,\omega> }{|1-< x,\omega > |})}f(\omega)d \omega
\end{equation*}

Now we may state the main theorem of this paper:\\

\begin{thm} 
Let $f$ be in $\mathcal{A}'(\partial B(\mathbb{H}^{n})$, then we have for $Re(i\lambda)>0$  and $\textbf{p} \geq 2 $ :\\
\begin{equation*}
 \|P_{\lambda,l}f\|_{\lambda,\ast,\textbf{p}} < +\infty \Leftrightarrow f \in {L}^{\textbf{p}}(\partial B(\mathbb{H}^{n}))
\end{equation*}

Where:
\begin{equation*}
\|P_{\lambda,l}f\|_{\lambda,\ast,\textbf{p}}=sup _{0\leq r <1}{(1-{r}^{2})}^{\frac{(2n+1-Re(i\lambda)}{2}}{[\int_{\partial B(\mathbb{H}^{n})}{|(P_{\lambda,l}f)(r\omega)|}^{\textbf{p}} d\omega]}^{1/\textbf{p}}.
\end{equation*}
Morever, there exists to positive constants $C_{l}(\lambda)$ and $\delta_{l}(\lambda)$ such as:
\begin{equation*}
C_{l}(\lambda){\|f\|}_{\textbf{p}} \leq \|P_{\lambda,l}f\|_{\lambda,\ast,\textbf{p}}  \leq \delta_{l}(\lambda) {\|f\|}_{\textbf{p}}
\end{equation*}

 \end{thm}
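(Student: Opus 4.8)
My plan is to establish the equivalence together with the quantitative two-sided estimate by proving the two implications separately and by treating $\textbf{p}=2$ and $\textbf{p}>2$ with different tools. The implication $f\in L^{\textbf{p}}\Rightarrow\|P_{\lambda,l}f\|_{\lambda,*,\textbf{p}}<+\infty$, together with the upper bound $\|P_{\lambda,l}f\|_{\lambda,*,\textbf{p}}\le\delta_l(\lambda)\|f\|_{\textbf{p}}$, I expect to hold for every $\textbf{p}\in[1,\infty]$ and to be the soft direction; the converse, yielding $C_l(\lambda)\|f\|_{\textbf{p}}\le\|P_{\lambda,l}f\|_{\lambda,*,\textbf{p}}$ and the membership $f\in L^{\textbf{p}}$, is the substantial one, and it is there that the hypotheses $Re(i\lambda)>0$ and $\textbf{p}\ge2$ are really used.

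For the upper bound I would argue directly on the integral realisation of $P_{\lambda,l}$ on $B(\mathbb{H}^n)$. Since $\chi_l$ is bounded on $K$ by $\dim V_l=2l+1$, the modulus of the integral kernel is dominated by $(1-|x|^2)^{(2n+1+Re(i\lambda))/2}\,|1-\langle x,\omega\rangle|^{-(2n+1+Re(i\lambda))}$, so after inserting the normalising weight defining $\|\cdot\|_{\lambda,*,\textbf{p}}$ one is reduced to controlling, uniformly in $r=|x|<1$, the boundary integrals of the weighted kernel. The analytic input is the Korányi-type estimate $\int_{\partial B(\mathbb{H}^n)}|1-\langle x,\omega\rangle|^{-s}\,d\omega\le C\,(1-|x|^2)^{2n+1-Re(s)}$, valid precisely for $Re(s)>2n+1$, which is where $Re(i\lambda)>0$ enters. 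A short computation then shows that the weight in the statement is calibrated exactly so that the weighted kernel has row and column integrals bounded uniformly in $r$ (indeed it behaves like an approximate identity as $r\to1$); Schur's test, equivalently Minkowski's integral inequality, upgrades this to a uniform $L^{\textbf{p}}\to L^{\textbf{p}}$ bound, hence the finiteness and the upper estimate.

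For the converse with $\textbf{p}=2$ I would pass to the Peter--Weyl decomposition $L^2(\partial B(\mathbb{H}^n))=\sum_{(p,q)}H_{p,q}$. By $K$-equivariance and Schur's lemma the operator $P_{\lambda,l,p,q}$ is scalar on each block $H_{p,q}$, the scalar being the radial generalised spherical function $\Phi_{\lambda,l,p,q}(r)$; writing $f=\sum_{p,q}f_{p,q}$ and using orthogonality of the blocks gives the exact Plancherel-type identity $\int_{\partial B(\mathbb{H}^n)}|P_{\lambda,l}f(r\omega)|^2\,d\omega=\sum_{p,q}|\Phi_{\lambda,l,p,q}(r)|^2\,\|f_{p,q}\|_2^2$. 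Here I would feed in the explicit expression of $\Phi_{\lambda,l,p,q}$ as a linear combination of two contiguous hypergeometric functions together with the boundary asymptotics of [2], from which the normalised quantities $w_{p,q}(r):=(1-r^2)^{(2n+1-Re(i\lambda))/2}|\Phi_{\lambda,l,p,q}(r)|$ have finite limits $\gamma_{p,q}(\lambda)$ as $r\to1$, with $\sup_{p,q,\,r}w_{p,q}(r)<\infty$ and $\inf_{p,q}|\gamma_{p,q}(\lambda)|>0$. The first bound yields $\|P_{\lambda,l}f\|_{\lambda,*,2}^2\le(\sup_{p,q,r}w_{p,q}^2)\,\|f\|_2^2$ through $\sup_r\sum\le\sum\sup_r$, while the second, combined with Fatou's lemma applied to the Peter--Weyl series as $r\to1$ inside the supremum, gives $\|P_{\lambda,l}f\|_{\lambda,*,2}^2\ge\sum_{p,q}|\gamma_{p,q}(\lambda)|^2\|f_{p,q}\|_2^2\ge c\,\|f\|_2^2$. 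This is the $\textbf{p}=2$ equivalence with explicit constants.

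For $\textbf{p}>2$ I would use an inversion formula realised through a weak compactness argument. Under the hypothesis the normalised family $u_r:=(1-r^2)^{(2n+1-Re(i\lambda))/2}P_{\lambda,l}f(r\cdot)$ is bounded in $L^{\textbf{p}}(\partial B(\mathbb{H}^n))$, and since $L^{\textbf{p}}$ is reflexive there is $r_j\to1$ with $u_{r_j}\rightharpoonup F$ weakly, $\|F\|_{\textbf{p}}\le\|P_{\lambda,l}f\|_{\lambda,*,\textbf{p}}$. The remaining point is to identify $F$ with a nonzero multiple $c_l(\lambda)f$ of the datum: this is exactly the Fatou-type boundary-value statement, and it follows by testing $u_{r_j}$ against smooth densities and invoking the approximate-identity behaviour of the normalised kernel (the same asymptotics used for $\textbf{p}=2$, now read on the diagonal block). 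Dividing by $c_l(\lambda)$ then gives $f\in L^{\textbf{p}}$ with the desired lower bound. The main obstacle throughout is the uniform asymptotic analysis of $\Phi_{\lambda,l,p,q}$: because these are combinations of contiguous ${}_2F_1$'s whose parameters drift with $(p,q)$, one must control the two competing boundary exponents and show that the $c$-function-type coefficients $\gamma_{p,q}(\lambda)$ remain bounded away from $0$ and $\infty$ as $(p,q)\to\infty$; only with this uniformity does the interchange of $\sup_r$ with the infinite Peter--Weyl sum produce the clean two-sided comparison with $\|f\|_{\textbf{p}}$. I expect essentially all the difficulty to sit in this step, which is quoted from [2].
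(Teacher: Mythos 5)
Your overall architecture coincides with the paper's: the heart of both arguments is the Peter--Weyl decomposition $L^{2}(\partial B(\mathbb{H}^{n}))=\sum_{(p,q)}H_{p,q}$, Schur's lemma making $P_{\lambda,l}^{r}$ scalar on each block, the Plancherel identity $\int|P_{\lambda,l}f(r\omega)|^{2}d\omega=\sum|\Phi_{\lambda,l,p,q}(r)|^{2}\|f_{p,q}\|_{2}^{2}$, and the boundary asymptotics of the generalised spherical functions imported from [2]. Two sub-arguments differ. For the upper bound you invoke a Kor\'anyi-type estimate $\int|1-\langle x,\omega\rangle|^{-s}d\omega\lesssim(1-|x|^{2})^{2n+1-Re(s)}$ plus Schur's test; the paper instead writes $P_{\lambda,l}f(r\cdot)$ as a convolution on $K$ and applies Young's inequality, computing $\|P_{\lambda,r}\|_{1}$ exactly as the elementary spherical function $\Phi_{Re(i\lambda),l}(r)$, which is what produces the explicit constant $\delta_{l}(\lambda)$ in terms of Gamma factors; your route gives the same qualitative bound but only an unspecified constant. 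For $\textbf{p}>2$ you use reflexivity of $L^{\textbf{p}}$ and extract a weak limit of $u_{r}$, whereas the paper first reduces to $\textbf{p}=2$ via $\|F\|_{\lambda,2,\ast}\le\|F\|_{\lambda,\textbf{p},\ast}$ (the boundary being a probability space), obtains $f\in L^{2}$ and an explicit inversion formula (Theorem 2.3), and then runs a duality/H\"older argument against continuous test functions $\Phi$, using the boundary convergence of $(1-r^{2})^{-(2n+1-i\lambda)/2}P_{\lambda,l}\Phi(r\cdot)$ in $L^{\textbf{q}}$; your weak-compactness argument is essentially the dual formulation of the same computation. Finally, the uniformity in $(p,q)$ that you rightly single out as the crux is resolved in the paper in the strongest possible form: Corollary 5.3 shows the limits $\gamma_{p,q}(\lambda)$ are all equal to a single constant $C_{l}(\lambda)$ independent of $(p,q)$, which is exactly what makes the finite-subset/Fatou argument close.

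One point you should repair in the $\textbf{p}>2$ step: identifying the weak limit $F$ with $c_{l}(\lambda)f$ by ``testing against smooth densities'' is not automatic while $f$ is still only a hyperfunction, since passing to the limit in the pairing $\langle f,(1-r^{2})^{\cdot}\,\overline{P_{\lambda,l}(r\cdot,\cdot)}\Phi\rangle$ requires convergence of the test functions in the real-analytic topology, not merely in $L^{\textbf{q}}$ or $\mathcal{C}^{\infty}$. The clean fix is the paper's: since $\|u_{r}\|_{2}\le\|u_{r}\|_{\textbf{p}}$, the hypothesis already forces $\|P_{\lambda,l}f\|_{\lambda,\ast,2}<+\infty$, so the $\textbf{p}=2$ theorem (which you have) gives $f\in L^{2}$ first; after that $f$ is an honest function and your identification of the weak limit goes through with $L^{\textbf{q}}$ convergence alone. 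Also note that because the asymptotics hold for the complex normalisation $(1-r^{2})^{-(2n+1-i\lambda)/2}\Phi_{\lambda,l,p,q}(r)\to C_{l}(\lambda)$, your family $u_{r}$ normalised with $Re(i\lambda)$ carries an oscillating phase $(1-r^{2})^{iIm(i\lambda)/2}$ when $\lambda\notin i\mathbb{R}$; either normalise with the complex exponent or work, as the paper does, with the modulus-squared kernel in the inversion formula so the phases cancel.
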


To obtain our main result, we first prove the case $\textbf{p}=2$. Namely we prove the following theorem:

\begin{thm} \label{some label} 
Let $f$ be in $\mathcal{A}'(\partial B(\mathbb{H}^{n})$, then we have for $Re(i\lambda)>0$  :\\
\begin{equation*}
 \|P_{\lambda,l}f\|_{\lambda,\ast,2} < +\infty \Leftrightarrow f \in {L}^{2}(\partial B(\mathbb{H}^{n}))
\end{equation*}

Where:
\begin{equation*}
\|P_{\lambda,l}f\|_{\lambda,\ast,2}=sup _{0\leq r <1}{(1-{r}^{2})}^{\frac{(2n+1-Re(i\lambda)}{2}}{[\int_{\partial B(\mathbb{H}^{n})}{|(P_{\lambda,l}f)(r\omega)|}^{2} d\omega]}^{1/2}.
\end{equation*}

\end{thm}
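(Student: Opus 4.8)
The plan is to use the Peter--Weyl / spherical harmonic decomposition $L^{2}(\partial B(\mathbb{H}^{n})) = \sum_{(p,q)} H_{p,q}$ announced in the introduction, together with the fact that the Poisson transform acts as a scalar on each isotypic component $H_{p,q}$, to reduce the abstract norm inequality to a concrete asymptotic estimate on the generalised spherical functions. First I would expand an arbitrary hyperfunction $f \in \mathcal{A}'(\partial B(\mathbb{H}^{n}))$ in its spherical harmonic series $f = \sum_{(p,q)} f_{p,q}$ with $f_{p,q} \in H_{p,q}$. Because the operator $P_{\lambda,l}$ intertwines the $K$-action, Schur's lemma forces $P_{\lambda,l}$ to act by a scalar $\Phi_{\lambda,l,p,q}(r)$ on each $H_{p,q}$; consequently, for fixed radius $r$, orthogonality of the spaces $H_{p,q}$ in $L^{2}(\partial B(\mathbb{H}^{n}))$ gives the clean Plancherel identity
\begin{equation*}
\int_{\partial B(\mathbb{H}^{n})} |(P_{\lambda,l}f)(r\omega)|^{2}\, d\omega = \sum_{(p,q)} |\Phi_{\lambda,l,p,q}(r)|^{2}\, \|f_{p,q}\|_{2}^{2}.
\end{equation*}

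Next I would insert the explicit formula for the generalised spherical functions $\Phi_{\lambda,l,p,q}$, which (as the introduction indicates) arise as a linear combination of two contiguous hypergeometric functions in the variable $r^{2}$. Multiplying by the normalising weight $(1-r^{2})^{(2n+1-\mathrm{Re}(i\lambda))/2}$ built into the norm $\|\cdot\|_{\lambda,\ast,2}$, the $\sup_{0\le r<1}$ in the definition becomes a question about the boundary behaviour of these weighted hypergeometric functions as $r \to 1^{-}$. The crucial input here is the asymptotic expansion of $\Phi_{\lambda,l,p,q}(r)$, resolved in reference [2]: under the hypothesis $\mathrm{Re}(i\lambda)>0$ one expects the weighted functions to satisfy two-sided bounds
\begin{equation*}
c_{l}(\lambda) \le \sup_{0\le r<1} (1-r^{2})^{\frac{2n+1-\mathrm{Re}(i\lambda)}{2}} |\Phi_{\lambda,l,p,q}(r)| \le d_{l}(\lambda),
\end{equation*}
with constants \emph{independent of $(p,q)$}. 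Granting uniform constants, taking the supremum over $r$ and summing over $(p,q)$ converts the Plancherel identity into the equivalence $\|P_{\lambda,l}f\|_{\lambda,\ast,2}^{2} \asymp \sum_{(p,q)} \|f_{p,q}\|_{2}^{2} = \|f\|_{2}^{2}$, which is exactly the claimed statement together with the two-sided constant bound $C_{l}(\lambda)\|f\|_{2} \le \|P_{\lambda,l}f\|_{\lambda,\ast,2} \le \delta_{l}(\lambda)\|f\|_{2}$. For the forward implication one argues that finiteness of the $\sup$-norm forces square-summability of the coefficients, hence $f \in L^{2}$; the converse is then the displayed upper bound.

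The main obstacle is precisely the \emph{uniformity in $(p,q)$} of the asymptotic constants. The hypergeometric parameters defining $\Phi_{\lambda,l,p,q}$ depend on $p$ and $q$, so a naive application of the $r\to 1^{-}$ expansion produces $(p,q)$-dependent constants that could blow up as $p+q\to\infty$ and destroy the summation step. The heart of the proof is therefore to control the Gamma-function quotients appearing in the connection coefficients of the two contiguous hypergeometric functions and to show that, after the weight $(1-r^{2})^{(2n+1-\mathrm{Re}(i\lambda))/2}$ is applied, the resulting bounds stabilise to constants depending only on $\lambda$ and $l$. I would handle this by isolating the leading singular term of each hypergeometric function via the standard transformation formula relating behaviour at $r^{2}=1$ to the Gamma quotient, showing the subleading terms are uniformly dominated, and verifying that the hypothesis $\mathrm{Re}(i\lambda)>0$ is exactly what guarantees the dominant term survives with the correct power while the remainder stays bounded uniformly in $(p,q)$.
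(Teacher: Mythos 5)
Your overall strategy for the sufficient direction coincides with the paper's: expand $f$ into its $K$-types, use the $K$-equivariance of $P_{\lambda,l}$ and Schur's lemma to reduce to the scalars $\Phi_{\lambda,l,p,q}(r)$, obtain the radial Plancherel identity, and then use the $r\to 1^{-}$ asymptotics of the generalised spherical functions (the paper's Corollary 5.3, with limit $C_{l}(\lambda)$ independent of $(p,q)$) applied to arbitrary finite partial sums to force $\sum_{p,q}\|f_{p,q}\|_{2}^{2}<\infty$. Where you genuinely diverge is the necessary direction (the upper bound $\|P_{\lambda,l}f\|_{\lambda,\ast,2}\leq \delta_{l}(\lambda)\|f\|_{2}$): you propose to sum the Plancherel identity against a uniform-in-$(p,q)$ bound $\sup_{r}(1-r^{2})^{(2n+1-\mathrm{Re}(i\lambda))/2}|\Phi_{\lambda,l,p,q}(r)|\leq d_{l}(\lambda)$, and you correctly identify that establishing this uniformity from the Gamma-quotients of the contiguous hypergeometric functions is the hard point --- but you leave it as a programme, and the paper's asymptotic result only gives the pointwise limit for each fixed $(p,q)$, not a uniform sup bound. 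The paper sidesteps this entirely (Proposition 6.1): it writes $(P_{\lambda,l}f)(r\omega)$ as a convolution on $K$ with the kernel $P_{\lambda,r}$, applies Young's inequality $\|P_{\lambda,r}\ast f\|_{2}\leq\|P_{\lambda,r}\|_{1}\|f\|_{2}$, and observes that $\|P_{\lambda,r}\|_{L^{1}}=\Phi_{\mathrm{Re}(i\lambda),l}(r)$ is the elementary spherical function of Proposition 3.3, whose weighted sup is finite precisely when $\mathrm{Re}(i\lambda)>0$; this yields the constant $\delta_{l}(\lambda)$ with no uniformity issue at all (and, applied to a single $H_{p,q}$, it actually delivers the uniform bound you want as a corollary). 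So your route is viable in principle but front-loads a delicate uniform hypergeometric estimate that the convolution argument makes unnecessary; if you keep your approach, you must actually prove the $(p,q)$-independence of $d_{l}(\lambda)$ rather than assert it, since the parameters $a,b,c$ of the relevant ${}_{2}F_{1}$'s all grow with $p$ and $q$.
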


The main theorem follows from the inversion formula given in the following theorem:

\begin{thm} \label{some label} 
Let $F = P_{\lambda,l}f, f \in {L}^{2}(\partial B(\mathbb{H}^{n}))  $. Then its ${L}^{2}$- boundary value is given by the following inversion formula:

\begin{equation*}
 f(u)= {(C_{l}(\lambda))}^{-2} lim_{r\mapsto {1}^{-}} {(1-{r}^{2})}^{-\frac{(2n+1-Re(i\lambda))}{2}}\int_{\partial B(\mathbb{H}^{n})}F(rv)  \overline{{P}_{\lambda,l}(ru,v)}dv 
\end{equation*}
in ${L}^{2}(\partial B(\mathbb{H}^{n}))$
Where $C_{l}(\lambda)$ is given by Corollary(5.3)( see section 5).
\end{thm}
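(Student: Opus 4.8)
The plan is to diagonalise everything by $K$-types and reduce the inversion integral to a single multiplier series, then read off the boundary limit from the asymptotics of the generalised spherical functions. First, since $f\in L^{2}(\partial B(\mathbb{H}^{n}))$, I would expand it along the Peter--Weyl decomposition $L^{2}(\partial B(\mathbb{H}^{n}))=\sum_{(p,q)}H_{p,q}$, writing $f=\sum_{(p,q)}f_{p,q}$ with $f_{p,q}\in H_{p,q}$, the series converging in $L^{2}$. By the Schur's lemma argument already used to define the operators $P_{\lambda,l,p,q}$, the Poisson transform is scalar on each component: $P_{\lambda,l}f(ru)=\sum_{(p,q)}\Phi_{\lambda,l,p,q}(r)\,f_{p,q}(u)$, where $\Phi_{\lambda,l,p,q}$ is the generalised spherical function computed explicitly (as a combination of two contiguous hypergeometric functions) in the previous sections. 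Applying the same scalar action to the reproducing (zonal) kernel $Z_{p,q}$ of $H_{p,q}$ gives the $K$-type expansion of the Poisson kernel itself, $P_{\lambda,l}(ru,v)=\sum_{(p,q)}\Phi_{\lambda,l,p,q}(r)\,Z_{p,q}(u,v)$.

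Next I would substitute the two expansions into the right-hand side of the inversion formula and apply Parseval together with the reproducing property $\int_{\partial B}f_{p,q}(v)\overline{Z_{p,q}(u,v)}\,dv=f_{p,q}(u)$ and the orthogonality of distinct components. The double sum collapses to the diagonal, yielding $\int_{\partial B}F(rv)\overline{P_{\lambda,l}(ru,v)}\,dv=\sum_{(p,q)}|\Phi_{\lambda,l,p,q}(r)|^{2}\,f_{p,q}(u)$. Thus the operator on the right of the formula is, for each fixed $r$, the $L^{2}$-bounded multiplier $f\mapsto\sum_{(p,q)}m_{p,q}(r)f_{p,q}$ with $m_{p,q}(r)=(1-r^{2})^{-(2n+1-\mathrm{Re}(i\lambda))/2}|\Phi_{\lambda,l,p,q}(r)|^{2}$, and the whole matter is reduced to understanding the limit of these multipliers as $r\to 1^{-}$.

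The decisive step is the boundary asymptotics of the generalised spherical functions. Using the explicit hypergeometric expression and the asymptotic analysis carried out in [2] (summarised in Corollary 5.3), one obtains the leading behaviour of $\Phi_{\lambda,l,p,q}(r)$ as $r\to 1^{-}$, whose governing coefficient is the constant $C_{l}(\lambda)$. The essential and nontrivial point is that the modulus of this leading coefficient does not depend on $(p,q)$: this is what forces the multiplier $(C_{l}(\lambda))^{-2}m_{p,q}(r)$ to tend to $1$ for every fixed $(p,q)$, rather than to a mode-dependent constant, and it is precisely here that the contiguous-hypergeometric computation and the Gamma-function identities behind Corollary 5.3 are indispensable. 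Granting this, each mode of $(C_{l}(\lambda))^{-2}\int_{\partial B}F(rv)\overline{P_{\lambda,l}(ru,v)}\,dv$ converges to $f_{p,q}(u)$.

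Finally I would upgrade the termwise limit to convergence in $L^{2}(\partial B(\mathbb{H}^{n}))$. By Parseval the squared $L^{2}$-distance between the normalised integral and $f$ is $\sum_{(p,q)}|(C_{l}(\lambda))^{-2}m_{p,q}(r)-1|^{2}\,\|f_{p,q}\|_{2}^{2}$, so it suffices to justify interchanging $\lim_{r\to 1^{-}}$ with the infinite sum. For this I would invoke a uniform-in-$(p,q)$ bound on the multipliers $m_{p,q}(r)$, which is exactly the finiteness furnished by the upper estimate $\|P_{\lambda,l}f\|_{\lambda,\ast,2}\le\delta_{l}(\lambda)\|f\|_{2}$ of the $L^{2}$-range theorem; a dominated-convergence argument in $\ell^{2}$ then closes the proof. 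I expect the main obstacle to be exactly this quantitative control: proving that the leading asymptotic constant is genuinely $(p,q)$-independent and that the normalised multipliers stay uniformly bounded up to the boundary, so that the $L^{2}$-limit may legitimately be taken term by term.
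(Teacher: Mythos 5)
Your proposal is correct and follows essentially the same route as the paper: expand $f$ (and, equivalently, the kernel) over the Peter--Weyl components $H_{p,q}$, collapse the inversion integral to the diagonal multiplier $\sum_{p,q}|\Phi_{\lambda,l,p,q}(r)|^{2}f_{p,q}(u)$, and conclude from the $(p,q)$-independent asymptotics of Corollary~5.3; your closing dominated-convergence step via the uniform bound $\delta_{l}(\lambda)$ is in fact more careful than the paper's, which asserts the termwise limit without justifying the interchange of limit and sum. One caveat: since the kernel enters quadratically, the correct normalising factor is $(1-r^{2})^{-(2n+1-\mathrm{Re}(i\lambda))}$, as used in the paper's own proof, rather than the half power you carried over from the theorem's statement into your multiplier $m_{p,q}(r)$.
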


 This paper is organised as follows:
 
 In Section3 we determine the elementary spherical function ; 
 
 In Section4 we describe the precise action  of the generalized Poisson transform on ${L}^{2}(\partial B(\mathbb{H}^{n}))$ and define the generalized spherical functions;
 
 In Section5 we determine explicitely the generalized spherical functions and conclude their asymptotic behaviour;

And in the last section we turn to the proof of the main theorem.
 
\section{Expression of the elementary spherical function }

Before giving the main result of this section let's first state the following proposition which will be useful in the sequel:
\begin{prop}
The genenralized poisson kernel given by: 
\begin{equation*}
{P}_{\lambda,l}(x,\omega)= (\frac{1- {|x|}^{2}  }{{|1-<x,\omega> |}^{2}})^{{\frac{i\lambda + \rho}{2}}}  {\chi}_{l}{(\frac{1-<x,\omega> }{|1-< x,\omega > |})}; x,\omega \in \partial B(\mathbb{H}^{n})
\end{equation*} is $K$ invariant. 
\end{prop}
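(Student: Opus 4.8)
The content of the statement is that $P_{\lambda,l}$ is invariant under the diagonal action of $K$ on the pair $(x,\omega)$, that is $P_{\lambda,l}(k\cdot x,k\cdot\omega)=P_{\lambda,l}(x,\omega)$ for every $k\in K$ (here $x\in B(\mathbb{H}^{n})$ and $\omega\in\partial B(\mathbb{H}^{n})$). Writing $k$ in the block form above with $A\in Sp(n)$ and $D\in Sp(1)$, and passing to the affine ball coordinates induced from the homogeneous coordinates on $\mathbb{H}^{n+1}$, one sees that $K$ fixes the base point and acts by $x\mapsto A\,x\,\overline{D}$, and in the same way on the boundary. The plan is then to check separately that each of the three building blocks of the kernel, namely the factor $1-|x|^{2}$, the factor $|1-\langle x,\omega\rangle|$, and the character term $\chi_{l}\bigl(\tfrac{1-\langle x,\omega\rangle}{|1-\langle x,\omega\rangle|}\bigr)$, is left unchanged by this action.

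The key computation will be the behaviour of the quaternionic Hermitian form. Writing it as $\langle x,\omega\rangle=\omega^{*}x$ and using $A^{*}A=I_{n}$ for $A\in Sp(n)$, the scalar quaternion $\overline{D}$ factors out on the right while its conjugate appears on the left through the conjugate transpose, yielding
\begin{equation*}
\langle A\,x\,\overline{D},\;A\,\omega\,\overline{D}\rangle
= D\,\omega^{*}A^{*}A\,x\,\overline{D}
= D\,\langle x,\omega\rangle\,D^{-1},
\end{equation*}
so that the form is transformed by honest conjugation by $D\in Sp(1)$. The only genuinely delicate point is to track the non-commutativity of $\mathbb{H}$ correctly here, so that the two $Sp(1)$ contributions assemble into a conjugation rather than a two-sided product that the character could not absorb; the $Sp(n)$ part drops out at once because $A$ is quaternion-unitary. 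In the same way $|A\,x\,\overline{D}|^{2}=|x|^{2}$, the action being by quaternionic isometries.

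It then remains to read off the invariance of the whole kernel. From $|kx|=|x|$ the factor $1-|x|^{2}$ is unchanged. Since $1$ is central and conjugation preserves the quaternionic modulus, $1-\langle kx,k\omega\rangle=D\,(1-\langle x,\omega\rangle)\,D^{-1}$ has the same modulus as $1-\langle x,\omega\rangle$, so the exponential factor is invariant. Finally the phase transforms as $u\mapsto DuD^{-1}$ with $u=\tfrac{1-\langle x,\omega\rangle}{|1-\langle x,\omega\rangle|}\in Sp(1)$, and because $\chi_{l}=\mathrm{tr}\,\tau_{l}$ is a class function on $Sp(1)$ one has $\chi_{l}(DuD^{-1})=\chi_{l}(u)$. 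Multiplying the three invariant factors gives $P_{\lambda,l}(k\cdot x,k\cdot\omega)=P_{\lambda,l}(x,\omega)$, which is the assertion. The main obstacle is thus concentrated entirely in pinning down the $K$-action and establishing the conjugation formula above; once that is in place, the class-function property of $\chi_{l}$ closes the argument immediately.
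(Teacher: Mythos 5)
Your proposal is correct and follows essentially the same route as the paper: both reduce everything to the computation $\langle kx,k\omega\rangle = D\,\langle x,\omega\rangle\,\overline{D}$ using ${}^{t}\overline{A}A=I$ for $A\in Sp(n)$, then invoke invariance of the quaternionic modulus under this conjugation together with the class-function property of $\chi_{l}$ to absorb the $D$-conjugation. Your write-up is in fact slightly more careful than the paper's in spelling out why $\chi_{l}(DuD^{-1})=\chi_{l}(u)$ (it is the trace of $\tau_{l}$, hence a class function, rather than a multiplicative character as the paper's wording suggests), but the argument is the same.
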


\begin{proof}

Let $k = \left(
                                \begin{array}{cc}
                                  A & 0 \\
                                  0 & D \\
                                \end{array}
                              \right)$ be in $K$, then we have for every $x$ and $\omega$ in $ \partial B(\mathbb{H}^{n}) $:

   \begin{equation*}
   < kx,k\omega > = < Ax\overline{D},A\omega\overline{D} > = ^{t}D. ^{t}\overline{x}(^{t}\overline{A}.A)\omega\overline{D}
   \end{equation*} 
   But $A \in Sp(n)$ then $^{t}\overline{A}.A = Id$ and:
 \begin{equation*}                         
< kx,k\omega > = ^{t}D. ^{t}\overline{x}\omega\overline{D}= ^{t}D <x,\omega> \overline{D}
 \end{equation*}
 Then we have:
 \begin{equation*}
{P}_{\lambda,l}(kx,k\omega)= (\frac{1- {|x|}^{2}  }{{|1-^{t}D <x,\omega> \overline{D}|}^{2}})^{{\frac{i\lambda + \rho}{2}}}  {\chi}_{l}{(\frac{1-^{t}D<x,\omega>\overline{D} }{|1-^{t}D< x,\omega >\overline{D} |})}
\end{equation*}

 And finally, recalling that ${\chi}_{l}$ is a character we conclude that:
 
 \begin{equation*}{\chi}_{l}{(\frac{D(1-<x,\omega>)\overline{D} }{|1-< x,\omega >|})}={\chi}_{l}{(\frac{1-<x,\omega> }{|1-< x,\omega > |})}
  \end{equation*}
  
  which gives:
   \begin{equation*}
   {P}_{\lambda,l}(kx,k\omega)={P}_{\lambda,l}(x,\omega)
 \end{equation*}
\end{proof}

Now let's recall the following lemma (see for instance[8]):
\begin{lem}
Let $f$ be a $\mathbb{C}$ valued function on $\partial B({\mathbb{H}}^{n})$ with:
\begin{equation*}
 f(\omega)= g({\omega}_{1}), \omega = ({\omega}_{1},...,{\omega}_{n}) \in \partial B({\mathbb{H}}^{n})
\end{equation*}
Then we have for some positive constant $C$ :
\begin{equation*}
{\int}_{\partial B(\mathbb{H}^{n})}f({\omega})d\omega= C  {\int}_{\{\textbf{q}\in \mathbb{H}, |\textbf{q}|<1\}}  f(\textbf{q}){(1-{|\textbf{q}|}^{2})}^{\rho-d}dm(\textbf{q})
\end{equation*}
\end{lem}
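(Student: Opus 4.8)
The plan is to identify the boundary $\partial B(\mathbb{H}^{n})$ with the Euclidean unit sphere $S^{4n-1}\subset \mathbb{H}^{n}\cong\mathbb{R}^{4n}$, carrying its rotation-invariant surface measure $d\omega$, and then to compute the push-forward of $d\omega$ under the projection $\omega=(\omega_{1},\dots,\omega_{n})\mapsto\omega_{1}$ onto the first quaternionic coordinate, which ranges over the open ball $\{\mathbf{q}\in\mathbb{H}:|\mathbf{q}|<1\}\cong B^{4}$. Since the integrand $f(\omega)=g(\omega_{1})$ depends on $\omega_{1}$ alone, the integral over the sphere collapses to an integral over this ball, and both the weight $(1-|\mathbf{q}|^{2})^{\rho-d}$ and the constant $C$ will appear as, respectively, the Jacobian of the disintegration and the measure of a fibre; here $\rho=2n+1$ and $d=\dim_{\mathbb{R}}\mathbb{H}=4$.

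First I would split a point of the sphere as $\omega=(\mathbf{q},\omega')$ with $\mathbf{q}=\omega_{1}\in\mathbb{R}^{4}$ and $\omega'=(\omega_{2},\dots,\omega_{n})\in\mathbb{R}^{4(n-1)}$, subject to $|\mathbf{q}|^{2}+|\omega'|^{2}=1$, so that for fixed $\mathbf{q}$ with $|\mathbf{q}|<1$ the fibre is the sphere of radius $\sqrt{1-|\mathbf{q}|^{2}}$ in $\mathbb{R}^{4(n-1)}$. Introducing bi-spherical coordinates $\mathbf{q}=r\xi$ (with $\xi\in S^{3}$, $0\le r<1$) and $\omega'=\sqrt{1-r^{2}}\,\eta$ (with $\eta\in S^{4n-5}$), the surface measure factorises as
\[
d\omega = r^{3}(1-r^{2})^{\frac{4(n-1)-2}{2}}\,dr\,d\sigma_{S^{3}}(\xi)\,d\sigma_{S^{4n-5}}(\eta).
\]
Recognising $r^{3}\,dr\,d\sigma_{S^{3}}(\xi)=dm(\mathbf{q})$ as Lebesgue measure on $\mathbb{R}^{4}$ in polar form, and integrating out $\eta$ (which is legitimate because $f$ is constant along the fibres), I obtain
\[
\int_{S^{4n-1}}f(\omega)\,d\omega = |S^{4n-5}|\int_{|\mathbf{q}|<1}f(\mathbf{q})\,(1-|\mathbf{q}|^{2})^{\frac{4(n-1)-2}{2}}\,dm(\mathbf{q}).
\]
It then suffices to match exponents: $\frac{4(n-1)-2}{2}=2n-3=(2n+1)-4=\rho-d$, and to set $C=|S^{4n-5}|$ (adjusted by whatever normalisation of $d\omega$ is in force), which gives the claim.

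The one genuine computation, and the step I expect to demand the most care, is the justification of the measure factorisation, that is, the appearance of the Jacobian $(1-|\mathbf{q}|^{2})^{(4(n-1)-2)/2}$. I would obtain it either from the spherical-polar parametrisation above by computing the determinant of the induced metric, or, more conceptually, from the coarea formula applied to the submersion $\pi:\omega\mapsto\omega_{1}$ on $S^{4n-1}$: a short linear-algebra computation gives $D\pi\,D\pi^{*}=I_{4}-\mathbf{q}\mathbf{q}^{\mathsf{T}}$ on the tangent space, hence the normal Jacobian $\sqrt{1-|\mathbf{q}|^{2}}$ and a factor $(1-|\mathbf{q}|^{2})^{-1/2}$, while the fibre, a sphere of radius $\sqrt{1-|\mathbf{q}|^{2}}$ in $\mathbb{R}^{4(n-1)}$, contributes its area $|S^{4n-5}|(1-|\mathbf{q}|^{2})^{(4n-5)/2}$; the product is exactly $(1-|\mathbf{q}|^{2})^{2n-3}$. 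Everything else --- factoring out the fibre integral and absorbing constants into $C$ --- is routine, and the rotation invariance of $d\omega$ ensures the reduced integral depends on $\mathbf{q}$ only through $g$.
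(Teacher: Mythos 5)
Your proof is correct. Note, however, that the paper does not actually prove this lemma: it is merely recalled with a pointer to reference [8], so there is no internal argument to compare yours against. Your disintegration of the sphere measure over the first quaternionic coordinate is the standard route, and the bookkeeping checks out: with $N=4n$ and $k=\dim_{\mathbb{R}}\mathbb{H}=4$ the exponent is $\tfrac{N-k-2}{2}=2n-3=(2n+1)-4=\rho-d$, which is consistent with the weight $(1-r^{2})^{2n-3}$ and the polar form $dm(\mathbf{q})=r^{3}\sin^{2}\theta\,d\theta\,dr$ that the paper uses downstream in Proposition 3.3 and Section 5. The only point worth flagging is the normalisation of $d\omega$ (probability measure versus surface area), which you already absorb into the constant $C$; either of your two justifications of the Jacobian (induced metric in bi-spherical coordinates, or the coarea formula with normal Jacobian $\sqrt{1-|\mathbf{q}|^{2}}$ and fibre area $|S^{4n-5}|(1-|\mathbf{q}|^{2})^{(4n-5)/2}$) is complete and would serve as a legitimate self-contained proof of the cited lemma.
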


Let's denote by $ e_{1} $ the unit vector of $ {\mathbb{H}}^{n} $ and define the elementary spherical function as being the mean of the generalised Poisson transform at $tht.{e}_{1}, t \in\mathbb{R} $:\\

 ${\Phi}_{\lambda,l}(t):= ({P}_{\lambda,l}1)(tht.{e}_{1})$\\

Then we may state the following proposition wich will be useful in the proof of the main theorem:

\begin{prop}
The elementary spherical function $ {\Phi}_{\lambda,l}(t)$ is given by:\\
 $ \dfrac{\pi}{4}(2l+1) \frac{\Gamma(2)\Gamma(2n-2)}{\Gamma(2n)}{(1-{tht}^{2})}^{\frac{i\lambda+2n+1}{2}}{F}_{21}(\frac{i\lambda+2n+1}{2}+l,\frac{i\lambda+2n+1}{2}-l-1;2n;{tht}^{2}) $

\end{prop}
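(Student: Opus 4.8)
The plan is to compute the integral defining $\Phi_{\lambda,l}(t)=(P_{\lambda,l}1)(\tanh t\cdot e_1)$ directly from the boundary realisation of the Poisson transform. Taking $f\equiv 1$ and $x=\tanh t\cdot e_1$, the defining formula gives
\begin{equation*}
\Phi_{\lambda,l}(t)=\int_{\partial B(\mathbb{H}^n)}\Bigl(\frac{1-|x|^2}{|1-\langle x,\omega\rangle|^2}\Bigr)^{\frac{i\lambda+2n+1}{2}}\chi_l\Bigl(\frac{1-\langle x,\omega\rangle}{|1-\langle x,\omega\rangle|}\Bigr)\,d\omega.
\end{equation*}
Since $x=\tanh t\cdot e_1$ lies along the first coordinate axis, the integrand depends on $\omega$ only through its first component $\omega_1$; that is, $\langle x,\omega\rangle=\tanh t\cdot\overline{\omega_1}$ (up to the conjugation convention of $L$). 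So the integrand has the form $g(\omega_1)$, and I would invoke Lemma~3.2 to reduce the integral over $\partial B(\mathbb{H}^n)$ to an integral over the quaternionic unit ball $\{\mathbf{q}\in\mathbb{H}:|\mathbf{q}|<1\}$ against the weight $(1-|\mathbf{q}|^2)^{\rho-d}$, where $d=4$ is the real dimension of $\mathbb{H}$ and $\rho=2n+1$. This converts the spherical average into a concrete integral over a $4$-real-dimensional ball.

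Next I would introduce polar-type coordinates on $\mathbb{H}$, writing $\mathbf{q}=r(\cos\theta+y\sin\theta)$ with $r=|\mathbf{q}|$, $\theta\in[0,\pi]$, and $y$ a unit imaginary quaternion, so that $dm(\mathbf{q})$ factors as $r^3\,dr\,(\text{angular measure})$. The character term is handled using the explicit formula stated in the preliminaries, namely $\chi_l(\mathbf{q}/|\mathbf{q}|)=\sin(2l+1)\theta/\sin\theta$, which reduces $\chi_l$ of the normalised quantity $\tfrac{1-\langle x,\omega\rangle}{|1-\langle x,\omega\rangle|}$ to a function of the angle between $1$ and $1-\tanh t\cdot\mathbf{q}$. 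After integrating out the $y$-sphere (which contributes a constant factor, the source of the overall $\tfrac{\pi}{4}(2l+1)$ and the Beta-function ratio $\tfrac{\Gamma(2)\Gamma(2n-2)}{\Gamma(2n)}$), the problem collapses to a one- or two-variable integral in $r$ and $\theta$ with integrand built from powers of $|1-\tanh t\cdot\mathbf{q}|$ and the Gegenbauer/Chebyshev expression for $\chi_l$.

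The remaining task is to recognise the resulting integral as a Gauss hypergeometric function. I would expand $|1-\tanh t\cdot\mathbf{q}|^{-2\cdot\frac{i\lambda+2n+1}{2}}$ via the binomial/generating-function identity for Gegenbauer polynomials (this is exactly Takahashi's technique cited as [3]), match the angular $\theta$-integral against the orthogonality relations for $\sin(2l+1)\theta$, and perform the radial $r$-integral using the standard Euler-type integral representation
\begin{equation*}
{}_2F_1(a,b;c;z)=\frac{\Gamma(c)}{\Gamma(b)\Gamma(c-b)}\int_0^1 s^{b-1}(1-s)^{c-b-1}(1-sz)^{-a}\,ds
\end{equation*}
after the substitution $s=r^2$. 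Collecting the parameters should yield precisely $a=\tfrac{i\lambda+2n+1}{2}+l$, $b=\tfrac{i\lambda+2n+1}{2}-l-1$, $c=2n$, and $z=\tanh^2 t$, together with the prefactor $(1-\tanh^2 t)^{\frac{i\lambda+2n+1}{2}}$. The main obstacle I anticipate is the bookkeeping in the angular integration: correctly extracting the single Gegenbauer term that survives against $\chi_l$ (so that the two contiguous parameters $\pm l$ emerge with the right shift $-l-1$ rather than $-l$), and tracking all normalising constants so that the Beta factor $\tfrac{\Gamma(2)\Gamma(2n-2)}{\Gamma(2n)}$ and the $\tfrac{\pi}{4}(2l+1)$ come out exactly. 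Verifying convergence (guaranteed by $\mathrm{Re}(i\lambda)>0$) and justifying the interchange of summation and integration are routine once the expansion is set up.
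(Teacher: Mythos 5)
Your outline follows essentially the same route as the paper: reduce to the quaternionic unit ball via Lemma~3.2, pass to polar coordinates $\mathbf{q}=r(\cos\theta+y\sin\theta)$, handle the character term by Takahashi's method, and finish with a radial Euler-type integral. Two of your named tools need adjustment, though. First, the Gegenbauer polynomial appears with argument $\frac{1-\eta\cos\theta}{|1-\eta e^{i\theta}|}$, i.e.\ the cosine of the \emph{rotated} angle $\omega$ with $e^{i\omega}=\frac{1+\eta e^{i\theta}}{|1+\eta e^{i\theta}|}$, not of $\theta$ itself, so orthogonality of $\sin(2l+1)\theta$ does not apply directly; the paper instead rewrites $C^{1}_{2l}$ as $\frac{(1+2\eta\cos\theta+\eta^{2})^{1/2}}{2i\eta\sin\theta}\bigl[(1+\eta e^{i\theta})^{2l+1}-(1+\eta e^{-i\theta})^{2l+1}\bigr]|1+\eta e^{i\theta}|^{-(2l+1)}$ and then invokes Takahashi's integral $\int_{-\pi}^{\pi}\frac{\sin\theta\,d\theta}{(1+\eta e^{i\theta})^{\alpha}(1+\eta e^{-i\theta})^{\beta}}=(\beta-\alpha)\eta\pi i\,{F}_{21}(\alpha,\beta;2;\eta^{2})$. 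Second, after the angular integration the radial integrand already contains a full ${F}_{21}(s-l-1,s+l;2;r^{2}\tanh^{2}t)$, so the plain Euler representation (whose integrand is a binomial $(1-sz)^{-a}$) is not the right formula; one needs Bateman's integral formula, which raises the third parameter from $2$ to $2n$ and produces the factor $\frac{\Gamma(2)\Gamma(2n-2)}{\Gamma(2n)}$. Both points are repairable (term-by-term integration recovers Bateman's formula), but as stated they are where your plan would stall.
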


\begin{proof}

\begin{equation*}
 {\Phi}_{\lambda,l}(t)= {\int}_{\partial B({\mathbb{H}}^{n})}{(\frac{1-{tht}^{2}}{{|1-tht{\omega}_{1}|}^{2}})}^{\frac{i\lambda+\rho}{2}}{\chi}_{l}(\frac{1-tht{\omega}_{1}}{|1-tht{\omega}_{1}|})d{\omega}_{1} ;
\end{equation*}

Using the lemma below and setting $s:=\frac{i\lambda+\rho}{2}$ we can write :

 \begin{equation*}
{ \Phi}_{\lambda,l}(t)= {(1-{tht}^{2})}^{s} {\int}_{\{\textbf{q}\in \mathbb{H}, |\textbf{q}|<1\}}{|1-tht\textbf{q}|}^{-2s}{\chi}_{l}(\frac{1-tht\textbf{q}}{|1-tht\textbf{q}|}){(1-{|\textbf{q}|}^{2})}^{\rho-d}dm(\textbf{q})
\end{equation*}

\begin{equation*}
        = {(1-{tht}^{2})}^{s}{\int}_{\{\textbf{q}\in \mathbb{H}, |\textbf{q}|<1\}}{|1-tht\textbf{q}|}^{-2s}{C}_{2l}^{1}(Re(\frac{1-tht\textbf{q}}{|1-tht\textbf{q}|})){(1-{|\textbf{q}|}^{2})}^{\rho-d}dm(\textbf{q})
\end{equation*}

\begin{equation*}
= {(1-{tht}^{2})}^{s} {\int}^{1}_{0} {\int}^{\pi}_{0} {|1-rtht{e}^{i\theta}|}^{-2s}{C}_{2l}^{1}(\frac{1-rtht\cos\theta|}{|1-rtht{e}^{i\theta}|}){(1-{r}^{2})}^{\rho -d}{r}^{3}{{\sin}^{2}\theta} d\theta dr
\end{equation*}
Where we have set:

\begin{equation*}
  \textbf{q}=r(\cos\theta+\sin\theta y); \theta \in [0,\pi];y \in \mathbb{H}, Re(y)=0, \sum {y}_{j}^{2}=1
\end{equation*}
Let $ \eta = rtht $ and set:
\begin{equation*}
{J}_{\lambda,l}(\eta)=   {\int}^{\pi}_{0} {|1-\eta{e}^{i\theta}|}^{-2s}{C}_{2l}^{1} (\frac{1-\eta \cos\theta|}{|1-\eta{e}^{i\theta}|}){\sin}^{2}\theta d\theta
\end{equation*}

Wich may be written $ (\theta\rightarrow \pi-\theta): $
 \begin{equation*}
{J}_{\lambda,l}(\eta)=   {\int}^{\pi}_{0} {|1+\eta{e}^{i\theta}|}^{-2s}{C}_{2l}^{1} (\frac{1+\eta \cos\theta|}{|1+\eta{e}^{i\theta}|}){\sin}^{2}\theta d\theta
\end{equation*}

Set (see [3])
\begin{equation*}
  {e}^{i\omega}= \frac{1+\eta{e}^{i\theta}}{|1+\eta{e}^{i\theta}|}
\end{equation*}

Then we have:

\begin{equation*}
  \cos\omega = \frac{1+ \eta \cos\theta}{|1+\eta{e}^{i\theta}|};\sin\omega = \frac{\eta \sin \theta}{|1+\eta{e}^{i\theta}|}; |\omega|< \pi/2
\end{equation*}

And,

\begin{equation*}
   {C}^{1}_{2l}(\frac{1+ \eta \cos\theta}{|1+\eta{e}^{i\theta}|})= {C}^{1}_{2l}(\cos\omega) = \frac{ \sin (2l+1)\omega }{\sin\omega}
\end{equation*}

\begin{equation*}
   {C}^{1}_{2l}(\frac{1+ \eta \cos\theta}{|1+\eta{e}^{i\theta}|})= \frac{{(1+2\eta\cos\theta+{\eta}^{2})}^{1/2}}{2i\eta \sin\theta}[{e}^{i(2l+1)\omega}-{e}^{-i(2l+1)\omega}]
\end{equation*}

\begin{equation*}
   {C}^{1}_{2l}(\frac{1+ \eta \cos\theta}{|1+\eta{e}^{i\theta}|})= \frac{{(1+2\eta\cos\theta+{\eta}^{2})}^{1/2}}{2i\eta \sin\theta}[{(\frac{1+\eta{e}^{i\theta}}{|1+\eta{e}^{i\theta}|})}^{2l+1}-{(\frac{1+\eta{e}^{-i\theta}}{|1+\eta{e}^{i\theta}|})}^{2l+1}]
\end{equation*}

Then
\begin{equation*}
{J}_{\lambda,l}(\eta)=  \frac{1}{2i\eta} {\int}^{\pi}_{0} {|1+\eta{e}^{i\theta}|}^{-2s-2l}[{(1+\eta{e}^{i\theta})}^{2l+1}-{(1+\eta{e}^{-i\theta})}^{2l+1}]\sin\theta d\theta
\end{equation*}

\begin{equation*}
{J}_{\lambda,l}(\eta)=  \frac{1}{2i\eta} {\int}^{\pi}_{-\pi} {(1+\eta{e}^{i\theta})}^{-s-l}{(1+\eta{e}^{-i\theta})}^{-s-l}{(1+\eta{e}^{i\theta})}^{2l+1}\sin\theta d\theta
\end{equation*}

\begin{equation*}
{J}_{\lambda,l}(\eta)=  \frac{1}{2i\eta} {\int}^{\pi}_{-\pi} {(1+\eta{e}^{i\theta})}^{-s+l+1}{(1+\eta{e}^{-i\theta})}^{-s-l} sin\theta d\theta
\end{equation*}

Hence
\begin{equation*}
{J}_{\lambda,l}(\eta)= \dfrac{\pi}{2}(2l+1){F}_{21}(s-l-1,s+l;2;{\eta}^{2})
\end{equation*}

Using the following lemma (see [3]):

\begin{equation*}
{\int}^{\pi}_{-\pi} \frac{\sin\theta d\theta}{{(1+\eta{e}^{i\theta})}^{\alpha}{(1+\eta{e}^{-i\theta})}^{\beta}}= (\beta-\alpha)\eta \pi i {F}_{21}(\alpha,\beta,2;{\eta}^{2})
\end{equation*}

But we have:
\begin{equation*}
{\Phi}_{\lambda,l}(t)= {(1-{tht}^{2})}^{\frac{i\lambda+\rho}{2}}{\int}_{0}^{1}{(1-{r}^{2})}^{\rho-d}{r}^{3}{J}_{\lambda,l}(rtht)dr
\end{equation*}

Hence\\

$ {\Phi}_{\lambda,l}(t)   = \dfrac{\pi}{2}  (2l+1) {(1-{tht}^{2})}^{\frac{i\lambda+\rho}{2}}{\int}_{0}^{1}{(1-{r}^{2})}^{\rho-d}{r}^{3}\\
{F}_{21}(\frac{i\lambda+\rho}{2}-l-1,\frac{i\lambda+\rho}{2}+l;2;{r}^{2}{tht}^{2})dr $\\

And using Bateman's integral formula for:\\
 $ \mathcal{R}e(c) > \mathcal{R}e(s) > 0, z \neq 1, \mid arg(1-z) \mid <\pi $:\\
 
$ {F}_{21}(a,b,c,z)= \frac{\Gamma(c)}{\Gamma(s)\Gamma(c-s)}{\int}_{0}^{1} {x}^{s-1}{(1-x)}^{c-s-1}{F}_{21}(a,b,s,xz)dx $\\

We can write  $(\rho =2n+1)$:\\

$ {\Phi}_{\lambda,l}(t)= \dfrac{\pi}{4}(2l+1) \frac{\Gamma(2)\Gamma(2n-2)}{\Gamma(2n)}{(1-{tht}^{2})}^{\frac{i\lambda+2n+1}{2}}{F}_{21}(\frac{i\lambda+2n+1}{2}+l,\frac{i\lambda+2n+1}{2}-l-1;2n;{tht}^{2})$

\end{proof}

\section{Precise action of the generalised Poisson transform on ${L}^{2}(\partial B(\mathbb{H}^{n}))$  }

In this section we study the action of the generalised Poisson transform on ${L}^{2}(\partial B(\mathbb{H}^{n}))$.See [5]

Denote by $w_{1},...,w_{n}$ the standard coordinates on ${\mathbb{H}}^{n}$\\

\begin{equation*}
\left\{
                     \begin{array}{ll}
                        w_{1}= \|w\| \cos\xi (\cos\varphi +y \sin \varphi)\\
                      w_{j} = \|w\| {\eta}_{j}\sin \xi
                     \end{array}
                   \right.
\end{equation*}

Where: $0 \leq \xi \leq \pi/2$ ; $0 \leq \varphi \leq \pi$; $y \in \mathbb{H}$\\

And:\begin{equation*}
{|y|}^{2}=1 ; Re(y)=0; {\eta}_{j} \in \mathbb{H};{\sum}_{j=2}^{n}{|{\eta}_{j}|}^{2}=1
\end{equation*}

Let:\begin{equation*}
  {V}_{p,q} = \{ (p,q) \in {\mathbb{N}}^{2}; p \in \mathbb{N}; q-p \in 2\mathbb{N} \}
\end{equation*}

Then we have by the Peter-Weyl theorem:

\begin{equation*}
{L}^{2}(\partial B(\mathbb{H}^{n}))= \sum_{(p,q) \in {V}_{p,q}}{H}_{p,q}
\end{equation*}

And the zonal spherical harmonics that spans ${H}_{p,q}$, first found by Kenneth Johnson and Nolan R. Wallach [5] are given by the following formula (see for instance [10]):

\begin{equation*}
{\phi}_{p,q}(\omega)= \frac{1}{(p+1)} {\cos}^{q}\xi \frac{\sin((p+1)\varphi)}{\sin\varphi} {F}_{21}(\frac{p-q}{2},-\frac{p+q+2}{2};2n-2;-{\tan}^{2}\xi)
\end{equation*}

Now Let's define:

\begin{equation*}
{P}_{\lambda,l}^{r}: {H}_{p,q} \rightarrow {L}^{2}(\partial B(\mathbb{H}^{n}))
\end{equation*}
with
\begin{equation*}
 ({P}_{\lambda,l}^{r}f)(u):= {\int}_{\partial B({\mathbb{H}}^{n})}{P}_{\lambda,l}(ru,v)f(v)dv
\end{equation*}

Then we may state the following proposition:
\begin{prop}
Let $f$ be in ${H}_{p,q}$. Then we have for  $\lambda \in \mathbb{C} $ and $ r \in [0,1[ $:
\begin{equation*}
({P}_{\lambda,l}f)(r.\omega) = {\Phi}_{\lambda,l,p,q}(r)f(\omega)
\end{equation*}

where:
\begin{equation*}
{\Phi}_{\lambda,l,p,q}(r):= ({P}_{\lambda,l}{\phi}_{p,q})(r.{e}_{1})
\end{equation*}

\end{prop}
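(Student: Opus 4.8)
The plan is to recognise the asserted identity as an instance of Schur's lemma applied to the restriction of the Poisson transform to the irreducible $K$-module $H_{p,q}$. The first observation is that, in the integral realisation on the boundary, one has $(P_{\lambda,l}f)(ru)=(P_{\lambda,l}^{r}f)(u)$ for every $u\in\partial B(\mathbb{H}^{n})$, since $P_{\lambda,l}(x,\omega)$ is exactly the kernel appearing in $P_{\lambda,l}^{r}$ with $x=ru$. Thus the statement is equivalent to proving that $P_{\lambda,l}^{r}$ acts on $H_{p,q}$ as multiplication by the scalar $\Phi_{\lambda,l,p,q}(r)$.

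The first step is to establish that $P_{\lambda,l}^{r}$ is a $K$-intertwining operator. Writing $L_{k}$ for the regular action $(L_{k}f)(v)=f(k^{-1}v)$ of $K$ on functions on the boundary, I would compute $(P_{\lambda,l}^{r}(L_{k}f))(u)$, perform the change of variables $v\mapsto kv$ (legitimate because $dv$ is $K$-invariant), and then invoke the $K$-invariance of the kernel from Proposition 3.1 in the form $P_{\lambda,l}(ru,kv)=P_{\lambda,l}(rk^{-1}u,v)$, using that $K$ acts linearly and isometrically so that $k^{-1}(ru)=r(k^{-1}u)$. This yields $P_{\lambda,l}^{r}\circ L_{k}=L_{k}\circ P_{\lambda,l}^{r}$, i.e.\ equivariance. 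It is worth noting that the presence of the character factor $\chi_{l}$ is harmless precisely because Proposition 3.1 already incorporates it into the $K$-invariance of the whole kernel.

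Next I would appeal to the Peter-Weyl decomposition together with the fact, due to Johnson and Wallach [5], that each $H_{p,q}$ is an irreducible $K$-submodule and that these submodules are pairwise inequivalent, so that the decomposition is multiplicity-free. Schur's lemma then forces the intertwiner $P_{\lambda,l}^{r}$, whose image of $H_{p,q}$ lies in the corresponding isotypic component, to map $H_{p,q}$ into itself and to act there as a single scalar $c_{\lambda,l,p,q}(r)$; equivalently, $(P_{\lambda,l}f)(r\omega)=c_{\lambda,l,p,q}(r)f(\omega)$ for all $f\in H_{p,q}$.

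It then remains only to identify the constant. Taking $f=\phi_{p,q}$ and $u=e_{1}$ gives $(P_{\lambda,l}\phi_{p,q})(re_{1})=c_{\lambda,l,p,q}(r)\,\phi_{p,q}(e_{1})$, so the computation reduces to the normalisation $\phi_{p,q}(e_{1})=1$. This follows by evaluating the explicit zonal formula at the parameter values $\xi=0$, $\varphi=0$ that describe $e_{1}$: there $\cos^{q}\xi=1$, the hypergeometric factor equals $1$ because its argument $-\tan^{2}\xi$ vanishes, and the limit $\sin((p+1)\varphi)/\sin\varphi\to p+1$ cancels the prefactor $1/(p+1)$. Hence $c_{\lambda,l,p,q}(r)=\Phi_{\lambda,l,p,q}(r)$, as claimed. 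I expect the genuine obstacle to be not the final normalisation, which is routine, but the structural input feeding Schur's lemma, namely the irreducibility and multiplicity-freeness of the $K$-decomposition of $L^{2}(\partial B(\mathbb{H}^{n}))$ in the quaternionic setting, which guarantees that $P_{\lambda,l}^{r}$ preserves each $H_{p,q}$ rather than merely mapping it into the full $L^{2}$ space.
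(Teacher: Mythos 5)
Your proposal is correct and follows essentially the same route as the paper: establish $K$-equivariance of $P_{\lambda,l}^{r}$ from the kernel invariance of Proposition 3.1 via a change of variables, apply Schur's lemma on each irreducible component $H_{p,q}$, and identify the scalar by evaluating on the zonal function $\phi_{p,q}$ at $e_{1}$. Your write-up is in fact slightly more careful than the paper's, since you make explicit both the multiplicity-freeness needed to conclude that $P_{\lambda,l}^{r}$ preserves $H_{p,q}$ and the normalisation $\phi_{p,q}(e_{1})=1$, which the paper leaves implicit.
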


\begin{proof}
By the $K$-invariance property of the Poisson kernel given in Proposition3.1 we can show that the operator ${P}_{\lambda,l}^{r}$ acts from ${H}_{p,q}$ to ${H}_{p,q}$. \\

Furthermore we have for every K-irreductible representation $\pi_{p,q}$ of ${H}_{p,q}$. :
\begin{equation*}
\pi_{p,q}({k}^{-1}){P}_{\lambda,l}^{r}(f)(u)= ({P}_{\lambda,l}^{r}f)(ku)
\end{equation*}

\begin{equation*}
\pi_{p,q}({k}^{-1}){P}_{\lambda,l}^{r}(f)(u)= {\int}_{\partial B({\mathbb{H}}^{n})}{P}_{\lambda,l}(k.ru,v)f(v)dv
\end{equation*}

\begin{equation*}
\pi_{p,q}({k}^{-1}){P}_{\lambda,l}^{r}(f)(u)= {\int}_{\partial B({\mathbb{H}}^{n})}{P}_{\lambda,l}(ru,{k}^{-1}.v)f(v)dv
\end{equation*}

Setting $\omega= {k}^{-1}.v$ we can write:
\begin{equation*}
\pi_{p,q}({k}^{-1}){P}_{\lambda,l}^{r}(f)(u)= {\int}_{\partial B({\mathbb{H}}^{n})}{P}_{\lambda,l}(ru,\omega)f(k\omega)d\omega
\end{equation*}

Therfore
\begin{equation*}
\pi_{p,q}({k}^{-1}){P}_{\lambda,l}^{r}(f)(u)= {P}_{\lambda,l}^{r}(\pi_{p,q}({k}^{-1}f))(u)
\end{equation*}

Wich gives that we have for each $k \in K $

\begin{equation*}
\pi_{p,q}({k})\circ {P}_{\lambda,l}^{r} = {P}_{\lambda,l}^{r}\circ \pi_{p,q}({k})
\end{equation*}
 
Hence ${P}_{\lambda,l}^{r}$ commutes with all  K-irreductible representations $\pi_{p,q}$ of ${H}_{p,q}$.
 It follows by Schur lemma that the operator ${P}_{\lambda,l}^{r}$ is scalar on each component ${H}_{p,q}$.Hence there exists a constant ${\Phi}_{\lambda,l,p,q}(r)$ such that on each ${H}_{p,q}$ we have :\\
\begin{equation*}
   {P}_{\lambda,l}^{r}= {\Phi}_{\lambda,l,p,q}(r).I 
\end{equation*}

Where $I$ is the identity operator on ${H}_{p,q}$.\\

Taking the zonal spherical function ${\phi}_{p,q}$ we get:
\begin{equation*}
{\Phi}_{\lambda,l,p,q}(r)= ({P}_{\lambda,l}^{r}{\phi}_{p,q})({e}_{1})
\end{equation*}

\end{proof}
 We call the ${\Phi}_{\lambda,l,p,q}$: generalised spherical functions.

\section{Generalised spherical harmonics and asymptotic behaviour }

In this section, we should give the explicit expression of the generalised spherical functions defined below . 

Let's state the following theorem:
\begin{thm}
  The generalised spherical harmonics are given, up to a constant, by:
\begin{equation*}
{\Phi}_{\lambda,l,p,q}(tht)= \frac{\pi}{4. (p+1)}\frac{\Gamma(2)\Gamma(2n-2)}{\Gamma(q+2n)}  {(1-{tht}^{2})}^{s} {(tht)}^{p}
\end{equation*}
\begin{equation*}
[{(s+l)}_{\frac{p+q}{2}+1}{(s-l-1)}_{\frac{q-p}{2}}{F}_{21}(s-l-1+ \frac{q-p}{2}, s+l+1+\frac{p+q}{2}; q+2n;{tht}^{2})
\end{equation*}

\begin{equation*}
- {(s-l-1)}_{\frac{p+q}{2}+1}{(s+l)}_{\frac{q-p}{2}}{F}_{21}(s+l+ \frac{q-p}{2}, s-l+\frac{p+q}{2}; q+2n;{tht}^{2})]
 \end{equation*}

\end{thm}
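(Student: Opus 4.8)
The plan is to compute $\Phi_{\lambda,l,p,q}(\tanh t)=(P_{\lambda,l}\phi_{p,q})(\tanh t\cdot e_{1})$ by the scheme already used for the elementary spherical function in Proposition 3.4, carrying the zonal factor $\phi_{p,q}$ through both integrations; throughout set $s=\frac{i\lambda+\rho}{2}$. First I would observe that at $x=\tanh t\cdot e_{1}$ the Poisson kernel depends on $\omega$ only through its first coordinate $\omega_{1}$: in the coordinates $\omega_{1}=\cos\xi(\cos\varphi+y\sin\varphi)$, $\omega_{j}=\eta_{j}\sin\xi$ one has $\mathrm{Re}(1-\tanh t\,\omega_{1})=1-\tanh t\cos\xi\cos\varphi$ and $|1-\tanh t\,\omega_{1}|=|1-\tanh t\cos\xi\,e^{i\varphi}|$, both independent of the imaginary direction $y$ and of $(\eta_{2},\dots,\eta_{n})$. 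Since $\phi_{p,q}$ likewise depends only on $\xi$ and $\varphi$, Lemma 3.2 applies verbatim; writing $\mathbf{q}=r(\cos\varphi+y\sin\varphi)$ and integrating out $y$ leaves, up to a constant $c$,
\begin{equation*}
\Phi_{\lambda,l,p,q}(\tanh t)=c\,(1-\tanh^{2}t)^{s}\int_{0}^{1}\!\!\int_{0}^{\pi}|1-r\tanh t\,e^{i\varphi}|^{-2s}C_{2l}^{1}(\cos\psi)\,\phi_{p,q}(\mathbf{q})\,(1-r^{2})^{\rho-d}r^{3}\sin^{2}\varphi\,d\varphi\,dr,
\end{equation*}
where $\cos\psi=\mathrm{Re}\dfrac{1-r\tanh t\,e^{i\varphi}}{|1-r\tanh t\,e^{i\varphi}|}$ and $\phi_{p,q}(\mathbf{q})=\dfrac{1}{p+1}r^{q}\dfrac{\sin((p+1)\varphi)}{\sin\varphi}F_{21}\!\left(\tfrac{p-q}{2},-\tfrac{p+q+2}{2};2n-2;-\tfrac{1-r^{2}}{r^{2}}\right)$.

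Next I would do the inner $\varphi$-integral with $\eta:=r\tanh t$ fixed. Repeating the substitution $e^{i\psi}=(1+\eta e^{i\varphi})/|1+\eta e^{i\varphi}|$ of Proposition 3.4 turns $C_{2l}^{1}(\cos\psi)$ into powers of $1+\eta e^{\pm i\varphi}$; the only new ingredient is the Chebyshev factor $\sin((p+1)\varphi)/\sin\varphi$, which combines with $\sin^{2}\varphi$ to give the weight $\sin((p+1)\varphi)\sin\varphi$. The analogue of $J_{\lambda,l}$ then becomes
\begin{equation*}
\tilde{J}(\eta)=\frac{1}{2i\eta}\int_{-\pi}^{\pi}(1+\eta e^{i\varphi})^{-s+l+1}(1+\eta e^{-i\varphi})^{-s-l}\sin((p+1)\varphi)\,d\varphi.
\end{equation*}
Writing $\sin((p+1)\varphi)=\frac{1}{2i}(e^{i(p+1)\varphi}-e^{-i(p+1)\varphi})$ and expanding each $(1+\eta e^{\pm i\varphi})^{-a}$ in its binomial series, term-by-term integration selects a single Fourier mode and gives, for $m\geq 0$,
\begin{equation*}
\frac{1}{2\pi}\int_{-\pi}^{\pi}\frac{e^{im\varphi}\,d\varphi}{(1+\eta e^{i\varphi})^{\alpha}(1+\eta e^{-i\varphi})^{\beta}}=(-\eta)^{m}\frac{(\beta)_{m}}{m!}\,F_{21}(\alpha,\beta+m;m+1;\eta^{2}).
\end{equation*}
Applying this with $\alpha=s-l-1$, $\beta=s+l$, $m=p+1$ already exhibits $\tilde{J}(\eta)$ as the advertised \emph{linear combination of two contiguous hypergeometric functions} in $\eta^{2}=r^{2}\tanh^{2}t$: up to a constant,
\begin{equation*}
\tilde{J}(\eta)=\frac{\pi\,\eta^{p}}{2\,(p+1)!}\Big[(s+l)_{p+1}F_{21}(s-l-1,s+l+p+1;p+2;\eta^{2})-(s-l-1)_{p+1}F_{21}(s+l,s-l+p;p+2;\eta^{2})\Big].
\end{equation*}

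Finally I would carry out the radial integral. A Pfaff transformation rewrites the radial part of $\phi_{p,q}$ as $\tfrac{1}{p+1}r^{p}F_{21}(\tfrac{p-q}{2},2n-1+\tfrac{p+q}{2};2n-2;1-r^{2})$, a polynomial of degree $\tfrac{q-p}{2}$. Inserting this together with the factor $\eta^{p}=(r\tanh t)^{p}$ from $\tilde{J}$ and the weight $(1-r^{2})^{\rho-d}r^{3}$, the change of variable $x=r^{2}$ brings each of the two terms to the form treated by Bateman's integral formula of Proposition 3.4. Expanding the degree-$\tfrac{q-p}{2}$ polynomial and applying that formula to each power $(1-x)^{k}$ produces a finite sum of Gauss functions with third parameters $p+2n,p+2n+1,\dots,q+2n$; collapsing this sum through the contiguity relations for $F_{21}$ simultaneously lifts the denominator parameter to $q+2n$, raises both numerator parameters by $\tfrac{q-p}{2}$, and turns the prefactors $(s+l)_{p+1}$, $(s-l-1)_{p+1}$ into the products $(s+l)_{\frac{p+q}{2}+1}(s-l-1)_{\frac{q-p}{2}}$ and $(s-l-1)_{\frac{p+q}{2}+1}(s+l)_{\frac{q-p}{2}}$. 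Once the constants assemble into $\frac{\pi}{4(p+1)}\frac{\Gamma(2)\Gamma(2n-2)}{\Gamma(q+2n)}$, this yields the stated expression.

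The step I expect to be the main obstacle is precisely this last radial integration. Because of the Jacobi-type polynomial weight carried by $\phi_{p,q}$ one must integrate a \emph{product} of two hypergeometric functions---one in $1-r^{2}$ and one in $r^{2}\tanh^{2}t$---so a single application of Bateman's formula does not close the computation; instead the resulting family of Gauss functions with shifted third parameters has to be reassembled into two $F_{21}$'s via the contiguity relations, and one must keep exact track of the signs (notably the $(-1)^{p}$ coming from $\varphi\mapsto\pi-\varphi$) and of the Pochhammer prefactors, so that the two contiguous terms appear with the correct coefficients and the denominator parameter lands precisely on $q+2n$.
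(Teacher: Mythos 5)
Your treatment of the angular integral is essentially the paper's: the reduction via Lemma 3.2, the substitution $e^{i\psi}=(1+\eta e^{i\varphi})/|1+\eta e^{i\varphi}|$, the binomial expansion and the Fourier--mode selection all match, and your intermediate expression for $\tilde J(\eta)$ agrees (up to constants) with the paper's quantity $I$, namely a combination of $F_{21}(s-l-1,s+l+p+1;p+2;\eta^{2})$ and $F_{21}(s+l,s-l+p;p+2;\eta^{2})$ weighted by $(s+l)_{p+1}$ and $(s-l-1)_{p+1}$. Up to that point the proposal is sound.

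The gap is exactly where you anticipate it: the radial integral. Your plan --- expand the degree-$\frac{q-p}{2}$ polynomial in powers of $1-r^{2}$, apply Bateman's formula to each power, and then ``collapse'' the resulting family of Gauss functions with third parameters $p+2n,\dots,q+2n$ via contiguity relations --- is not carried out, and it cannot be waved through. Termwise Bateman leaves the numerator parameters $s-l-1$ and $s+l+p+1$ untouched, so the claim that contiguity relations then simultaneously raise both numerator parameters by $\frac{q-p}{2}$, push the denominator parameter to $q+2n$, and produce precisely the prefactors $(s+l)_{\frac{p+q}{2}+1}(s-l-1)_{\frac{q-p}{2}}$ and $(s-l-1)_{\frac{p+q}{2}+1}(s+l)_{\frac{q-p}{2}}$ is essentially the identity to be proved, restated. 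The paper closes this step by a different mechanism: it writes the radial factor as the Jacobi polynomial $\mathfrak{J}^{(2n-3,p+1)}_{\frac{q-p}{2}}(2r^{2}-1)$ in Rodrigues form and integrates by parts $\frac{q-p}{2}$ times. Each integration by parts differentiates the Gauss factor, and the derivative formula for $F_{21}$ raises all three of its parameters by one while emitting the Pochhammer factors $(s-l-1)_{\frac{q-p}{2}}(s+l+p+1)_{\frac{q-p}{2}}/(2+p)_{\frac{q-p}{2}}$; a single application of Bateman's formula then lands the denominator parameter on $q+2n$, since $(\frac{p+q}{2}+2)+(\frac{q-p}{2}+2n-2)=q+2n$. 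That Rodrigues plus integration-by-parts device is the missing idea; without it (or an actual proof of your contiguity collapse) the computation does not close.
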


\begin{proof}
 using again lemma 3.2 we get:\\
 
 $ {\Phi}_{\lambda,l,p,q}(t)= \frac{1}{(p+1)}{(1-{tht}^{2})}^{\frac{i\lambda+\rho}{2}}{\int}_{\{ Z\in \mathbb{H}, |Z|<1\} }{|1-thtZ|}^{-i\lambda-2n-1}\\
 {\chi}_{l}(\frac{1-thtZ}{|1-thtZ|}){(1-{|Z|}^{2})}^{2n-3}h(Z)dm(Z)$\\

Where
\begin{equation*}
h(Z)= {|Z|}^{q}\frac{\sin((p+1)\varphi)}{\sin\varphi} {F}_{21}(\frac{p-q}{2},-\frac{p+q+2}{2};2n-2;\frac{{|Z|}^{2}-1}{{|Z|}^{2}})
\end{equation*}

And
\begin{equation*}
Z= |Z|(\cos\varphi +y \sin \varphi)
\end{equation*}

with
\begin{equation*}
{\sum }_{j\geq 2}{y}_{j}^{2}=1 ; Re(y)=0;
\end{equation*}

Setting $|Z|=r$ we get:\\

 $|1-tht Z|= |1-rtht.{e}^{i\varphi}|$ and $dm(Z)= {r}^{3}{\sin}^{2}\varphi d\varphi dr$\\

we can write:
\begin{center}
\begin{multline*}
 { \Phi}_{\lambda,l,p,q}(t)= \frac{1}{(p+1)} {(1-{tht}^{2})}^{\frac{i\lambda+2n+1}{2}} {\int}^{1}_{0} {\int}^{\pi}_{0} {|1-rtht{e}^{i\varphi}|}^{-i\lambda-2n-1}\\
 {C}_{2l}^{1}(\frac{1-rtht\cos\varphi|}{|1-rtht{e}^{i\varphi}|})
{(1-{r}^{2})}^{ 2n-3}{r}^{q+3}\sin\varphi\sin(p+1)\varphi\\
  {F}_{21}(\frac{p-q}{2},-\frac{p+q+2}{2};2n-2;\frac{{r}^{2}-1}{{r}^{2}})
d\theta dr
\end{multline*}
\end{center}
Which may be written:
\begin{center}
\begin{multline*}
{ \Phi}_{\lambda,l,p,q}(t)= \frac{1}{(p+1)} {(1-{tht}^{2})}^{\frac{i\lambda+2n+1}{2}} {\int}^{1}_{0} [{\int}^{\pi}_{0} {|1-rtht{e}^{i\varphi}|}^{-i\lambda-2n-1}\\
{C}_{2l}^{1}(\frac{1-rtht\cos\varphi|}{|1-rtht{e}^{i\varphi}|}) \sin\varphi\sin(p+1)\varphi d\varphi]  {(1-{r}^{2})}^{ 2n-3}{r}^{q+3}\\
{F}_{21}(\frac{p-q}{2},-\frac{p+q+2}{2};2n-2;\frac{{r}^{2}-1}{{r}^{2}}) dr
\end{multline*}
\end{center}
Set  $\eta:=rtht$ and $s:= \frac{i\lambda+2n+1}{2}$ and define:
\begin{equation*}
I = \frac{1}{(p+1)}{\int}^{\pi}_{0} {|1-\eta{e}^{i\varphi}|}^{-2s}{C}_{2l}^{1}(\frac{1-\eta\cos\varphi}{|1-\eta{e}^{i\varphi}|})\sin\varphi\sin(p+1)\varphi d\varphi
\end{equation*}

Which can be written ($\varphi\mapsto \pi -\varphi$)
\begin{equation*}
I = \frac{1}{(p+1)}{(-1)}^{p} {\int}^{\pi}_{0} {|1+\eta{e}^{-i\varphi}|}^{-2s}{C}_{2l}^{1}(\frac{1+\eta\cos\varphi}{|1+\eta{e}^{i\varphi}|})\sin\varphi\sin(p+1)\varphi d\varphi
\end{equation*}

\begin{equation*}
I = \frac{1}{(p+1)}\frac{{(-1)}^{p}}{2i\eta}2 {\int}^{\pi}_{-\pi} {|1+\eta{e}^{i\varphi}|}^{-2s-2l} {(1+\eta{e}^{i\varphi})}^{2l+1}\sin(p+1)\varphi d\varphi
\end{equation*}

\begin{equation*}
I = \frac{1}{(p+1)}\frac{{(-1)}^{p}}{2i\eta} {\int}^{\pi}_{-\pi} {(1+\eta{e}^{i\varphi})}^{-s+l+1} {(1+\eta{e}^{-i\varphi})}^{-s-l}\sin(p+1)\varphi d\varphi
\end{equation*}

\begin{equation*}
I = \frac{1}{(p+1)} \frac{{(-1)}^{p}}{-4\eta} {\int}^{\pi}_{-\pi} {(1+\eta{e}^{i\varphi})}^{-s+l+1} {(1+\eta{e}^{-i\varphi})}^{-s-l}({e}^{i(p+1)\varphi}-{e}^{-i(p+1)\varphi}) d\varphi
\end{equation*}

\begin{center}
\begin{multline*}
I = \frac{1}{(p+1)}\frac{{(-1)}^{p+1}{(-1)}^{p}}{-4\eta} {\Sigma}_{k,j}\frac{{\eta}^{k+j}{(s-l-1)}_{k}}{k!}\frac{{(s+l)}_{j}}{j!} \\
[{\int}^{\pi}_{-\pi}{e}^{i(k-j+p+1)\varphi}d\varphi -{\int}^{\pi}_{-\pi}{e}^{i(k-j-(p+1))\varphi}d\varphi]
\end{multline*}
\end{center}

\begin{center}
\begin{multline*}
I = \frac{2\pi}{(p+1)}\frac{1}{4\eta} [{\Sigma}_{k=0}^{+\infty}\frac{{\eta}^{2k+p+1}{(s-l-1)}_{k}{(s+l)}_{k+p+1}}{k!(k+p+1)!}\\
-{\Sigma}_{j=0}^{+\infty}\frac{{\eta}^{2j+p+1}{(s-l-1)}_{j+p+1}{(s+l)}_{j}}{j!(j+p+1)!} ]
\end{multline*}
\end{center}

\begin{center}
\begin{multline*}
I = \frac{\pi}{(p+1)}\frac{1}{2\eta}{\eta}^{p+1} [{\Sigma}_{k=0}^{+\infty}\frac{{(s-l-1)}_{k}{(s+l)}_{p+1}{(s+l+p+1)}_{k}}{k!{(2+p)}_{k}{(2)}_{p}}{\eta}^{2k}\\
-{\Sigma}_{j=0}^{+\infty}\frac{{(s-l-1)}_{p+1}{(s+l)}_{j}{(s-l+p)}_{j}}{j!{(2+p)}_{j}{(2)}_{p}}{\eta}^{2k} ]
\end{multline*}
\end{center}

When we have used the formula: ${(a)}_{k}{(a+k)}_{n}= {(a)}_{k+n}$

And
\begin{center}
\begin{multline*}
I = \frac{\pi}{(p+1)}\frac{1}{2}{\eta}^{p} [\frac{{(s+l)}_{p+1}}{{(2)}_{p}}{F}_{21}(s-l-1,s+l+p+1;2+p;{\eta}^{2})\\
-\frac{{(s-l-1)}_{p+1}}{{(2)}_{p}}{F}_{21}(s+l,s-l+p;2+p;{\eta}^{2})]
\end{multline*}
\end{center}

Therefore
\begin{equation*}
{\Phi}_{\lambda,l,p,q}(t)= \frac{\pi}{(p+1)}{(1-{tht}^{2})}^{s}[{I}_{1}-{I}_{2}]
\end{equation*}

With

$ {I}_{1} = \frac{1}{2 \times {(2)}_{p}}{(s+l)}_{p+1}{\int}_{0}^{1}{(1-{r}^{2})}^{2n-3}{r}^{q+3}{(rtht)}^{p}$\\

${F}_{21}(\frac{p-q}{2},-\frac{p+q+2}{2};2n-2;\frac{{r}^{2}-1}{{r}^{2}}){F}_{21}(s-l-1,s+l+p+1;2+p;{(rtht)}^{2})dr$\\

And

$ {I}_{2} = \frac{1}{2 \times {(2)}_{p}}{(s-l-1)}_{p+1}{\int}_{0}^{1}{(1-{r}^{2})}^{2n-3}{r}^{q+3}{(rtht)}^{p}$\\

${F}_{21}(\frac{p-q}{2},-\frac{p+q+2}{2};2n-2;\frac{{r}^{2}-1}{{r}^{2}}){F}_{21}(s+l,s-l+p;2+p;{(rtht)}^{2})dr$\\

Using the classical transformation formula:\\

${F}_{21}(a,b;c;x)= {(1-x)}^{-a}{F}_{21}(a,c-b;c;\frac{x}{x-1})$\\

we can write:

\begin{multline*}
{F}_{21}(\frac{p-q}{2},-\frac{p+q+2}{2};2n-2;\frac{{r}^{2}-1}{{r}^{2}})={r}^{p-q}{F}_{21}(\frac{p-q}{2},\frac{p+q}{2}+2n-1;2n-2;1-{r}^{2})
\end{multline*}

We can easily see that $\frac{q-p}{2}$ is a non negative integer.\\

Then we can write:
\begin{equation*}
{F}_{21}(\frac{p-q}{2},\frac{p+q}{2}+2n-1;2n-2;1-{r}^{2})= \frac{{\mathfrak{J}}_{\frac{q-p}{2}}^{(2n-3,p+1)}(2{r}^{2}-1)}{\binom{\frac{q-p}{2}+2n-3}{\frac{q-p}{2}}}
\end{equation*}
Where we have used the following known formula for $t=1-{r}^{2}$, $\mathfrak{J}$ being a Jacobi polynomial,  :\\

${\mathfrak{J}}_{N}^{(\alpha,\beta)}(1-2{t}^{2})= \binom {N+\alpha}{N}{F}_{21}(-N,N+ \alpha +\beta +1;\alpha +1;{t}^{2})$\\

$ {I}_{1} = \frac{1}{2 \times {(2)}_{p}}{(s+l)}_{p+1}{\int}_{0}^{1}{(1-{r}^{2})}^{2n-3}{r}^{q+3}{(rtht)}^{p}$\\

${F}_{21}(\frac{p-q}{2},-\frac{p+q+2}{2};2n-2;\frac{{r}^{2}-1}{{r}^{2}}){F}_{21}(s-l-1,s+l+p+1;2+p;{(rtht)}^{2})dr$\\

But:

${F}_{21}(\frac{p-q}{2},-\frac{p+q+2}{2};2n-2;\frac{{r}^{2}-1}{{r}^{2}})= \frac{{r}^{p-q} (\frac{q-p}{2})!}{(2n-2)_{\frac{q-p}{2}}} {\mathfrak{J}}_{\frac{q-p}{2}}^{(2n-3,p+1)}(2{r}^{2}-1)$\\

Then we can write:

$ {I}_{1} = \frac{1}{2 \times {(2)}_{p}}{(tht)}^{p}{(s+l)}_{p+1}\frac{(\frac{q-p}{2})!}{(2n-2)_{\frac{q-p}{2}}}{\int}_{0}^{1}{(1-{r}^{2})}^{2n-3}{r}^{2p+3}$\\

${\mathfrak{J}}_{\frac{q-p}{2}}^{(2n-3,p+1)}(2{r}^{2}-1){F}_{21}(s-l-1,s+l+p+1;2+p;{(rtht)}^{2})dr$\\

Setting $ x = 2{r}^{2}-1 $ we get:\\

$ {I}_{1} = \frac{1}{4\times 2 \times {(2)}_{p}}{(tht)}^{p}{(s+l)}_{p+1}\frac{(\frac{q-p}{2})!}{(2n-2)_{\frac{q-p}{2}}}{\int}_{-1}^{1}{(\frac{1-x}{2})}^{2n-3} \times  {{(\frac{1+x}{2})}}^{p+1}$\\

${\mathfrak{J}}_{\frac{q-p}{2}}^{(2n-3,p+1)}(x){F}_{21}(s-l-1,s+l+p+1;2+p;{(\frac{1+x}{2})tht}^{2})dx$\\

$ {I}_{1} = \frac{1}{  {(2)}_{p}}{(tht)}^{p}{(s+l)}_{p+1}\frac{(\frac{q-p}{2})!}{(2n-2)_{\frac{q-p}{2}} \times {2}^{2n+p+1}}{\int}_{-1}^{1}{(1-x)}^{2n-3} \times  {(1+x)}^{p+1}$\\

${\mathfrak{J}}_{\frac{q-p}{2}}^{(2n-3,p+1)}(x){F}_{21}(s-l-1,s+l+p+1;2+p;{(\frac{1+x}{2})tht}^{2})dx$\\

Then using the following known Rodrigues formula for Jacobi polynomials :\\

${\mathfrak{J}}_{N}^{(\alpha,\beta)}(x)= \frac{{(-1)}^{N}}{{2}^{N} N! {(1-x)}^{\alpha}{(1+x)}^{\beta}}  \frac{{d}^{N}}{{dx}^{N}}({(1-x)}^{N+\alpha}{(1+x)}^{N+\beta})$\\

We get:        \\
${\mathfrak{J}}_{{\frac{q-p}{2}}}^{(2n-3,p+1)}(x)= \frac{{(-1)}^{{\frac{q-p}{2}}}}{{2}^{{\frac{q-p}{2}}} ({\frac{q-p}{2}})! {(1-x)}^{2n-3}{(1+x)}^{p+1}}  \frac{{d}^{{\frac{q-p}{2}}}}{{dx}^{{\frac{q-p}{2}}}}({(1-x)}^{{\frac{q-p}{2}}+2n-3}{(1+x)}^{{\frac{q-p}{2}}+p+1})$\\

Wich gives:\\

$ {I}_{1} = \frac{{(-1)}^{\frac{q-p}{2}}}{ {(2)}_{p}}{(tht)}^{p}{(s+l)}_{p+1}\frac{1}{(2n-2)_{\frac{q-p}{2}} \times {2}^{2n+\frac{p+q}{2}+1}}\\
{\int}_{-1}^{1} \frac{{d}^{{\frac{q-p}{2}}}}{{dx}^{{\frac{q-p}{2}}}}({(1-x)}^{{\frac{q-p}{2}}+2n-3}{(1+x)}^{{\frac{p+q}{2}}+1})$ \\
${F}_{21}(s-l-1,s+l+p+1;2+p;{(\frac{1+x}{2})tht}^{2})dx$\\

Then  making an integration by part we can write\\

$ {I}_{1} = \frac{{(-1)}^{\frac{q-p}{2}}}{ {(2)}_{p}}{(tht)}^{p}{(s+l)}_{p+1}\frac{1}{(2n-2)_{\frac{q-p}{2}} \times {2}^{2n+\frac{p+q}{2}+1}} {(-1)}^{\frac{q-p}{2}} ({\frac{{tht}^{2}}{2})}^{\frac{q-p}{2}} \\
  \frac{  {(s-l-1)}_{\frac{q-p}{2}} {(s+l+p+1)}_{\frac{q-p}{2}} }{{(2+p)}_{\frac{q-p}{2}}} {\int}_{-1}^{1} ({(1-x)}^{{\frac{q-p}{2}}+2n-3}{(1+x)}^{{\frac{p+q}{2}}+1})\\
   {F}_{21}(s-l-1+\frac{q-p}{2},s+l+p+1+\frac{q-p}{2};2+p+\frac{q-p}{2};{(\frac{1+x}{2})tht}^{2})dx$\\

$ {I}_{1} = {(-1)}^{q-p} {(tht)}^{q}  {(s+l)}_{\frac{p+q}{2}+1}    {(s-l-1)}_{\frac{q-p}{2}} \frac{1}{{(2)}_{\frac{p+q}{2}} {(2n-2)}_{\frac{q-p}{2}} \times {2}^{2}}   $\\  

$ {\int}_{0}^{1} ({(1-y)}^{{\frac{q-p}{2}}+2n-3}{(y)}^{{\frac{p+q}{2}}+1})\\
{F}_{21}(s-l-1+\frac{q-p}{2},s+l+\frac{p+q}{2}+1;2+\frac{p+q}{2};y{tht}^{2})dy$\\

Recalling that $ q-p \in 2 \mathbb{N} $ and using Bateman's integral formula:
\begin{equation*}
{F}_{21}(a,b,c,z)= \frac{\Gamma(c)}{\Gamma(s)\Gamma(c-s)}{\int}_{0}^{1} {x}^{s-1}{(1-x)}^{c-s-1}{F}_{21}(a,b,s,xz)dx
\end{equation*}

 We can  write :\\
 
$ {I}_{1} =  {(tht)}^{q} \frac{\Gamma(2)\Gamma(2n-2)}{\Gamma(q+2n) \times {2}^{2}}  {(s+l)}_{\frac{p+q}{2}+1}    {(s-l-1)}_{\frac{q-p}{2}}$\\
 
$ {F}_{21}(s-l-1+\frac{q-p}{2},s+l+\frac{p+q}{2}+1;q+2n;{tht}^{2})$\\

And we get similarly :\\

$
 {I}_{2} = {(tht)}^{q} \frac{\Gamma(2)\Gamma(2n-2)}{\Gamma(q+2n) \times {2}^{2}}  {(s-l-1)}_{\frac{p+q}{2}+1}    {(s+l)}_{\frac{q-p}{2}}$\\
 
$ {F}_{21}(s+l+\frac{q-p}{2},s-l+\frac{p+q}{2};q+2n;{tht}^{2})$\\
\end{proof}

The main tool in the proof of the main theorem is the determination of the asymptotic behaviour of the generalized spherical functions.This result arises as direct consequence of the following theorem:[2]

\begin{thm}
For $(a-b), \alpha, \beta \in \mathbb{N} $ and $ a,b,c \in \mathbb{C}, Re(a+b+\alpha+\beta-c-1)> 0 $ we have:\\
\begin{equation*}
{lim}_{Z\rightarrow1}(1-Z)^{-c+a+b+\alpha+\beta-1}( {(a)}_{\alpha}{(b)}_{\beta}{F}_{21}(a+\alpha,b+\beta;c;z)-{(a)}_{\beta}{(b)}_{\alpha}{F}_{21}(a+\beta,b+\alpha;c;z)) \end{equation*}
\begin{equation*}
=\frac{\Gamma(c)}{\Gamma(a)\Gamma(b)}\Gamma(a+b+\alpha+\beta-c-1)(a-b)(\alpha-\beta)
\end{equation*}
\end{thm}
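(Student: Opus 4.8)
The plan is to reduce the whole statement to the two-term behaviour near $z=1$ of a single Gauss hypergeometric function. Write $A,B,C$ for the parameters of a generic ${F}_{21}(A,B;C;z)$ and set $P:=a+b+\alpha+\beta-c$. Both hypergeometric functions occurring in the statement then share the common value $A+B-C=P$, and the hypothesis is precisely $\mathrm{Re}(P)>1$. I would first dispose of the resonant values by analyticity: with $a,b,\alpha,\beta$ held fixed, the right-hand side is meromorphic in $c$, so it suffices to verify the identity on the open dense set where $P\notin\mathbb{Z}$ and then extend by continuity in $c$.

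On that set I invoke the classical connection formula writing ${F}_{21}(A,B;C;z)$ as a combination of ${F}_{21}(A,B;A+B-C+1;1-z)$ and of $(1-z)^{C-A-B}{F}_{21}(C-A,C-B;C-A-B+1;1-z)$. Since $\mathrm{Re}(C-A-B)=-\mathrm{Re}(P)<-1$, the dominant contribution comes from the second branch, and expanding its hypergeometric factor to first order in $1-z$ gives
\[
{F}_{21}(A,B;C;z)=\frac{\Gamma(C)\Gamma(P)}{\Gamma(A)\Gamma(B)}(1-z)^{-P}\Big[1+\frac{(C-A)(C-B)}{1-P}(1-z)\Big]+O\big((1-z)^{-P+2}\big)+O(1),
\]
where the trailing $O(1)$ is the regular branch. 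The assumption $\mathrm{Re}(P)>1$ guarantees that both the $O((1-z)^{-P+2})$ and the $O(1)$ contributions are negligible against the two displayed powers $(1-z)^{-P}$ and $(1-z)^{-P+1}$, so only these two powers matter.

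Applying this to the pair $(A,B)=(a+\alpha,b+\beta)$ and to $(A,B)=(a+\beta,b+\alpha)$, and using ${(a)}_{\alpha}{(b)}_{\beta}\,\Gamma(c)/[\Gamma(a+\alpha)\Gamma(b+\beta)]=\Gamma(c)/[\Gamma(a)\Gamma(b)]$ together with the analogous identity for the other pair, the leading coefficients of $(1-z)^{-P}$ coincide and therefore cancel in the difference. The difference is thus governed by the $(1-z)^{-P+1}$ term; after using $\Gamma(P)/(1-P)=-\Gamma(P-1)$ its coefficient equals
\[
-\frac{\Gamma(c)}{\Gamma(a)\Gamma(b)}\Gamma(P-1)\big[(c-a-\alpha)(c-b-\beta)-(c-a-\beta)(c-b-\alpha)\big].
\]
A direct expansion shows the bracket collapses to $(b-a)(\alpha-\beta)$, so multiplying by $(1-z)^{P-1}$ and letting $z\to1$ yields $\frac{\Gamma(c)}{\Gamma(a)\Gamma(b)}\Gamma(a+b+\alpha+\beta-c-1)(a-b)(\alpha-\beta)$, which is the claimed value.

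The main obstacle is the resonant case, where $C-A-B=-P$ is a negative integer and the connection formula acquires logarithmic terms, so the clean two-term expansion above is no longer literally available. I expect the most economical remedy to be the analytic-continuation argument indicated at the start: one checks that the $z\to1$ limit on the left is locally uniform in $c$, hence defines a holomorphic function that agrees with the meromorphic right-hand side off a discrete set and therefore everywhere. A secondary point demanding care is the error bookkeeping, namely confirming that no intermediate power of $(1-z)$ strictly between $-P+1$ and $0$ contaminates the computation; this is exactly what the hypothesis $\mathrm{Re}(P)>1$ secures.
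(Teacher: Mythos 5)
The paper itself does not prove this theorem: it is quoted from reference [2] (the author's separate preprint on linear combinations of contiguous hypergeometric functions), so there is no in-paper argument to compare yours against. Judged on its own, your argument for generic parameters is correct and complete. The connection formula at $z=1$ with $P:=a+b+\alpha+\beta-c$, $\mathrm{Re}(P)>1$, gives each term a singular branch with coefficient $\frac{\Gamma(c)\Gamma(P)}{\Gamma(A)\Gamma(B)}(1-z)^{-P}$; since $(a)_{\alpha}(b)_{\beta}/[\Gamma(a+\alpha)\Gamma(b+\beta)]=1/[\Gamma(a)\Gamma(b)]$ is symmetric in $\alpha\leftrightarrow\beta$, the $(1-z)^{-P}$ terms cancel exactly, and your subleading coefficient is right: the bracket $(c-a-\alpha)(c-b-\beta)-(c-a-\beta)(c-b-\alpha)$ does collapse to $(b-a)(\alpha-\beta)$, and $\Gamma(P)/(1-P)=-\Gamma(P-1)$ produces the stated constant $\frac{\Gamma(c)}{\Gamma(a)\Gamma(b)}\Gamma(P-1)(a-b)(\alpha-\beta)$. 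The error bookkeeping is also sound: after multiplying by $(1-z)^{P-1}$ the regular branch contributes $O((1-z)^{P-1})\to 0$ and the next singular term contributes $O(1-z)\to 0$. (Note that the hypothesis $a-b\in\mathbb{N}$ is never used; your argument proves the identity without it.)

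The one genuine incompleteness is the resonant case $P\in\mathbb{Z}$, which the theorem's hypotheses do allow. You flag it, but the repair you propose — continuity/holomorphy of the left-hand side in $c$ — hinges on local uniformity of the $z\to 1$ limit in $c$, which you assert rather than establish; proving that uniform bound essentially forces you to redo the asymptotics uniformly through the resonant values, so as written the argument is circular there. A more economical closure is direct: when $C-A-B=-P$ with $P\in\{2,3,\dots\}$, use the logarithmic form of the connection formula (DLMF 15.8.10). Its finite sum reproduces exactly the powers $(1-z)^{-P},\dots,(1-z)^{-1}$ with the same first two coefficients you computed (so the cancellation and the subleading term are unchanged), while the logarithmic terms enter only at absolute order $(1-z)^{0}\log(1-z)$ and are annihilated by the prefactor $(1-z)^{P-1}$ since $\mathrm{Re}(P-1)>0$. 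With that paragraph added, your proof is complete and, as far as one can tell, in the same spirit as the computation the author delegates to [2].
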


Taking $ a:= s+l, b:=s-l-1, \alpha:= \frac{p+q}{2}+1,\beta:= \frac{q-p}{2} $and $ c:=q+ 2n $ we can easily conclude that:

\begin{equation*}
{lim}_{r\rightarrow 1^{-}}{(1-{r}^{2})}^{-(\frac{2n+1-i\lambda}{2})}{\Phi}_{\lambda,l,p,q}(r) = {C}_{l}(\lambda)
\end{equation*}
 where:
\begin{equation*}
{C}_{l}(\lambda) = \frac{\pi}{4}(2l+1)\frac{\Gamma(2n-2)\Gamma(2s-2n-1)}{ \Gamma(s+l)\Gamma(s-l-1)}
\end{equation*}

Then we may state the following corollary:

\begin{cor}
Let $\lambda \in \mathbb{C} \backslash \mathbb{R}$ with $ Re(i\lambda)>0 $ , then there exist a constant ${C}_{l}(\lambda)$ such as :
\begin{equation*}
{lim}_{r\rightarrow 1^{-}}{(1-{r}^{2})}^{-(\frac{2n+1-i\lambda}{2})}{\Phi}_{\lambda,l,p,q}(r) = {C}_{l}(\lambda)
\end{equation*}
 where:
\begin{equation*}
{C}_{l}(\lambda) = \frac{\pi}{4}(2l+1)\Gamma(2n-2)\frac{\Gamma(i\lambda)}{ \Gamma(\frac{i\lambda+2n+1}{2}+l)\Gamma(\frac{i\lambda+2n+1}{2}-l-1)}
\end{equation*}
\end{cor}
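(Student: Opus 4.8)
The plan is to feed the explicit closed form of $\Phi_{\lambda,l,p,q}$ from Theorem 5.1 directly into the asymptotic identity of Theorem 5.2; the entire content of the corollary is then a matter of matching parameters and keeping track of exponents. Writing $s=\frac{i\lambda+2n+1}{2}$ and reading the variable $r$ as the quantity $tht$ of Theorem 5.1, that theorem presents $\Phi_{\lambda,l,p,q}(r)$ as the constant $\frac{\pi}{4(p+1)}\frac{\Gamma(2)\Gamma(2n-2)}{\Gamma(q+2n)}$ times $(1-r^{2})^{s}\,r^{p}$ times the bracketed difference of two Gauss hypergeometric functions. First I would multiply by the normalising factor $(1-r^{2})^{-(\frac{2n+1-i\lambda}{2})}$ and observe that the two powers of $(1-r^{2})$ combine into the single exponent $s-\frac{2n+1-i\lambda}{2}=i\lambda$, so that the prefactor of the bracket becomes exactly $(1-r^{2})^{i\lambda}\,r^{p}$.

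Next I would identify the bracketed difference with the left-hand side of Theorem 5.2 under the substitution $a:=s+l$, $b:=s-l-1$, $\alpha:=\frac{p+q}{2}+1$, $\beta:=\frac{q-p}{2}$, $c:=q+2n$ and $Z:=r^{2}$. A short check confirms that the two pairs of numerator parameters and the common denominator parameter in Theorem 5.1 coincide with $a+\alpha,\,b+\beta,\,c$ and $a+\beta,\,b+\alpha,\,c$ respectively (using the identity $s-l+\frac{p+q}{2}=b+\alpha$), while the Pochhammer weights $(a)_{\alpha}(b)_{\beta}$ and $(a)_{\beta}(b)_{\alpha}$ are precisely the coefficients appearing in the theorem. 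The crucial point is then to verify the hypotheses of Theorem 5.2: one has $a-b=2l+1\in\mathbb{N}$, the relation $q-p\in 2\mathbb{N}$ guarantees $\alpha,\beta\in\mathbb{N}$, and $Re(a+b+\alpha+\beta-c-1)=Re(i\lambda)>0$ is exactly the standing assumption. In particular the exponent $-c+a+b+\alpha+\beta-1$ equals $i\lambda$, matching the power of $(1-r^{2})$ produced in the previous step.

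With the hypotheses in place, letting $r\to 1^{-}$ (hence $Z\to 1$ and $r^{p}\to 1$) Theorem 5.2 gives the limit of $(1-r^{2})^{i\lambda}$ times the bracket as $\frac{\Gamma(c)}{\Gamma(a)\Gamma(b)}\Gamma(a+b+\alpha+\beta-c-1)(a-b)(\alpha-\beta)$. The final step is pure simplification: $\Gamma(c)=\Gamma(q+2n)$ cancels the $\Gamma(q+2n)$ in the denominator of the Theorem 5.1 constant; $\Gamma(2)=1$; the factor $\alpha-\beta=p+1$ cancels the $\frac{1}{p+1}$ in the same constant; $a-b=2l+1$; and $a+b+\alpha+\beta-c-1=i\lambda$, so $\Gamma(a+b+\alpha+\beta-c-1)=\Gamma(i\lambda)$. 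Collecting the surviving factors yields exactly
\begin{equation*}
\frac{\pi}{4}(2l+1)\Gamma(2n-2)\frac{\Gamma(i\lambda)}{\Gamma(\frac{i\lambda+2n+1}{2}+l)\Gamma(\frac{i\lambda+2n+1}{2}-l-1)}={C}_{l}(\lambda),
\end{equation*}
which is the asserted value.

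I would stress that the main obstacle here is organisational rather than analytic: Theorem 5.2, whose proof is cited to [2], already performs the genuine asymptotic work, and the only delicate points are (i) the exponent accounting that collapses the combination of prefactors into $(1-r^{2})^{i\lambda}$, and (ii) checking that $\frac{p+q}{2}+1$ and $\frac{q-p}{2}$ are bona fide non-negative integers so that Theorem 5.2 is applicable. The feature worth highlighting is that the resulting constant $C_{l}(\lambda)$ is \emph{independent} of the pair $(p,q)$; this independence is exactly what makes the normalised boundary limit well defined uniformly across all Peter--Weyl components, and it is the fact exploited in the inversion formula of Theorem 2.3.
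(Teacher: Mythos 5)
Your proposal is correct and follows exactly the route the paper takes: the paper obtains the corollary by substituting $a=s+l$, $b=s-l-1$, $\alpha=\frac{p+q}{2}+1$, $\beta=\frac{q-p}{2}$, $c=q+2n$ into Theorem 5.2 and simplifying, which is precisely your argument. You in fact supply more of the bookkeeping (the exponent collapse to $(1-r^{2})^{i\lambda}$, the verification that $\alpha,\beta\in\mathbb{N}$ and $a-b=2l+1$, and the cancellation $(\alpha-\beta)=p+1$ against the prefactor) than the paper, which states the substitution and asserts the conclusion.
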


\begin{rem}
Using the following propriety of contiguous hypergeometric functions:
 \begin{equation*}
a {F}_{21}(a+1,b,c,z) - b {F}_{21}(a,b+1,c,z) = (b-a) {F}_{21}(a,b,c,z)
\end{equation*}

   we find,for the case $l=0$,(up to a constant) the same expressions of the known generalised spherical functions. See for instance [8],[11].
   
   Morever, for $ p = q = 0$, we get, (up to a constant), the same expression of the elementary spherical function given by Proposition3.3.

\end{rem}

\section{Proof of the main theorem }
In this section we give the proof of the main theorem.\\
But let's first state the following proposition:\\

\begin{prop}
Let $ \lambda \in \mathbb{C} $ with $ Re(i\lambda)>0 $, then there exists a positive constant $ \delta_{l}(\lambda) $ such that for $ f \in {L}^{\textbf{p}}(\partial B({\mathbb{H}}^{n}), \textbf{p}\geq 2 $ we have:

\begin{equation*}
sup_{0\leq r < 1}{(1-{r}^{2})}^{-\frac{2n+1-Re(i\lambda)}{2}}{[{\int}_{\partial B({\mathbb{H}}^{n})}{|({P}_{\lambda,l}f)(r\omega)|}^{\textbf{p}}d\omega]}^{1/\textbf{p}}<{\delta}_{l}(\lambda){\|f\|}_{{L}^{p}(\partial B({\mathbb{H}}^{n}))}<+\infty
\end{equation*}
 \end{prop}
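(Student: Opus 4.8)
The plan is to exploit the $K$-invariance of the Poisson kernel established in Proposition 3.1 in order to realise $f\mapsto (P_{\lambda,l}f)(r\,\cdot)$ as a convolution operator on the compact space $\partial B({\mathbb{H}}^{n})\simeq K/M$, and thereby to reduce the whole estimate to the control of a single scalar quantity, the $L^1$-norm of the kernel. First I would fix the base point $o=e_1$ and set $\psi_r(v):=P_{\lambda,l}(ro,v)$. Proposition 3.1 gives $P_{\lambda,l}(rk\omega,kv)=P_{\lambda,l}(r\omega,v)$, so writing $\omega=ko$ yields
\begin{equation*}
({P}_{\lambda,l}f)(r\omega)=\int_{\partial B({\mathbb{H}}^{n})}\psi_r(k^{-1}v)f(v)\,dv=\int_{\partial B({\mathbb{H}}^{n})}\psi_r(u)\,f(ku)\,du .
\end{equation*}
Applying Minkowski's integral inequality in $\omega$ and using that $d\omega$ is $K$-invariant (so that $\|f(k\,\cdot)\|_{\textbf{p}}=\|f\|_{\textbf{p}}$), one obtains for every $\textbf{p}\geq 1$, in particular for $\textbf{p}\geq 2$,
\begin{equation*}
\Big[\int_{\partial B({\mathbb{H}}^{n})}|({P}_{\lambda,l}f)(r\omega)|^{\textbf{p}}\,d\omega\Big]^{1/\textbf{p}}\leq \mathcal{N}(r)\,\|f\|_{{L}^{\textbf{p}}(\partial B({\mathbb{H}}^{n}))},\qquad \mathcal{N}(r):=\int_{\partial B({\mathbb{H}}^{n})}|\psi_r(u)|\,du .
\end{equation*}
This single inequality treats all exponents $\textbf{p}\geq 2$ at once (for $\textbf{p}=2$ one could alternatively use Peter--Weyl together with the scalar action of Proposition 4.1), so the proposition is reduced to showing that $(1-r^{2})^{-\frac{2n+1-Re(i\lambda)}{2}}\mathcal{N}(r)$ stays bounded on $[0,1)$.

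Next I would estimate $\mathcal{N}(r)$. Since $\chi_l$ is the character of a unitary representation of dimension $2l+1$, one has $|\chi_l|\leq 2l+1$, whence with $s:=\frac{i\lambda+2n+1}{2}$
\begin{equation*}
\mathcal{N}(r)\leq (2l+1)\,(1-r^{2})^{Re(s)}\int_{\partial B({\mathbb{H}}^{n})}\frac{dv}{|1-<ro,v>|^{2Re(s)}} .
\end{equation*}
Applying Lemma 3.2 exactly as in the proof of Proposition 3.3 reduces the remaining integral to one over the unit ball of $\mathbb{H}$ against the weight $(1-{|\textbf{q}|}^{2})^{2n-3}$; carrying out the angular integration and the Bateman integral of that proof, now with $\chi_l$ replaced by the constant $1$, expresses it up to an explicit constant as $(1-r^{2})^{Re(s)}$ times a Gauss function ${F}_{21}$ evaluated at $r^{2}$. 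I would then read off the behaviour as $r\to 1^{-}$ from the classical connection formula ${F}_{21}(\alpha,\beta;\gamma;z)\sim \frac{\Gamma(\gamma)\Gamma(\alpha+\beta-\gamma)}{\Gamma(\alpha)\Gamma(\beta)}(1-z)^{\gamma-\alpha-\beta}$, valid precisely because $Re(i\lambda)>0$ forces $Re(\gamma-\alpha-\beta)<0$. Collecting the powers of $(1-r^{2})$ gives $\mathcal{N}(r)\sim C\,(2l+1)\,(1-r^{2})^{\frac{2n+1-Re(i\lambda)}{2}}$.

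Finally I would observe that $(1-r^{2})^{-\frac{2n+1-Re(i\lambda)}{2}}\mathcal{N}(r)$ is continuous and positive on $[0,1)$ and, by the previous step, has a finite limit as $r\to 1^{-}$; hence it is bounded on $[0,1)$ by a constant $\delta_l(\lambda)>0$, which is exactly the claimed inequality. The hard part will be the asymptotic analysis of the middle paragraph: one must not merely identify the leading power of $(1-r^{2})$ but guarantee that the bound is uniform in $r$ over the whole interval rather than a pointwise limit, and check that the hypothesis $Re(i\lambda)>0$ is what simultaneously secures convergence of the defining integral and the correct sign of $\gamma-\alpha-\beta$. Since this runs parallel to the computation underlying Corollary 5.3, I would borrow the same hypergeometric estimates there to obtain the required uniformity.
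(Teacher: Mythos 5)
Your proposal is correct and follows essentially the same route as the paper: realise $(P_{\lambda,l}f)(r\,\cdot)$ as a convolution on $K/M$ via the $K$-invariance of the kernel, apply Young's (equivalently Minkowski's) inequality to reduce everything to the $L^{1}$-norm of the kernel, identify that norm with a spherical-function-type expression through Lemma 3.2 and Bateman's formula, and control it uniformly on $[0,1)$ by the ${F}_{21}$ asymptotics at $z=1$, which is where $Re(i\lambda)>0$ enters. The only (harmless, indeed slightly more careful) deviation is that you bound $|\chi_{l}|\leq 2l+1$ and reduce to the scalar case, whereas the paper keeps $\chi_{l}$ and writes $\|P_{\lambda,r}\|_{L^{1}}$ directly as $\Phi_{Re(i\lambda),l}(r)$.
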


\begin{proof}
Let $   F = {P}_{\lambda,l}(f),f \in {L}^{\textbf{p}}(\partial B({\mathbb{H}}^{n}),\textbf{p}\geq 2, \lambda \in \mathbb{C} $ with $ Re(i\lambda)>0. $ \\

 Then we have (setting $ \theta = k e_{1}, k \in K ) $ :

\begin{equation*}
{\int}_{\partial B({\mathbb{H}}^{n})}{P}_{\lambda,l}(r\omega,\theta)f(\theta)d\theta = {\int}_{K}{P}_{\lambda,l}(r\omega,k{e}_{1})f(k{e}_{1})dk
\end{equation*}

\begin{equation*}
{\int}_{\partial B({\mathbb{H}}^{n})}{P}_{\lambda,l}(r\omega,\theta)f(\theta)d\theta = {\int}_{K}{P}_{\lambda,l}(r.h{e}_{1},k{e}_{1})f(k{e}_{1})dk, \omega= h.{e}_{1}, h \in K
\end{equation*}

\begin{equation*}
{\int}_{\partial B({\mathbb{H}}^{n})}{P}_{\lambda,l}(r\omega,\theta)f(\theta)d\theta = {\int}_{K}{P}_{\lambda,l}(r{e}_{1},{h}^{-1}k{e}_{1})f(k{e}_{1})dk
\end{equation*}

let's introduce the function $ P_{\lambda,r} $ on $ K $ as follows:\\

$ {P}_{\lambda,r}(k)={P}_{\lambda}(r.{e}_{1},k{e}_{1}) $
 Then we have:
\begin{equation*}
({P}_{\lambda,l}f)(r\omega) = ({P}_{\lambda,r}\ast f)(k{e}_{1})
\end{equation*}

Using the Hausdorff-Young inequality we get:
\begin{equation*}
{\|{P}_{\lambda,r}\ast f\|}_{{L}^{p}(\partial B({\mathbb{H}}^{n}))}\leq {\|{P}_{\lambda,r}\|}_{{L}^{1}(\partial B({\mathbb{H}}^{n}))}{\|f\|}_{{L}^{p}(\partial B({\mathbb{H}}^{n}))}
\end{equation*}

Morever we have,
\begin{equation*}
{\|{P}_{\lambda,r}\|}_{{L}^{1}(\partial B({\mathbb{H}}^{n})}= {\int}_{K}|{P}_{\lambda,r}(k{e}_{1})|dk 
\end{equation*}

But

${\int}_{K}|{P}_{\lambda,r}(k{e}_{1})|dk = {\int}_{\partial B({\mathbb{H}}^{n})}|{P}_{\lambda,l}(r{e}_{1},\omega)|d\omega  $
Then 
\begin{equation*}
{\int}_{K}|{P}_{\lambda,r}(k{e}_{1})|dk = |{\Phi}_{i\lambda,l}(r)|; 
\end{equation*}

Then we can write:
\begin{equation*}
{\|{P}_{\lambda,r}\|}_{{L}^{1}(\partial B({\mathbb{H}}^{n})}= {\Phi}_{Re(i\lambda),l}(r)
\end{equation*}
And:
\begin{multline*}
{\|{P}_{\lambda,r}\|}_{{L}^{1}(\partial B({\mathbb{H}}^{n})}= \frac{\pi}{4}(2l+1){(1-{r}^{2})}^{\frac{Re(i\lambda)+2n+1}{2}}\\
{F}_{21}(\frac{Re(i\lambda)+2n+1}{2}+l,\frac{Re(i\lambda)+2n+1}{2}-l-1;2n;{r}^{2})
\end{multline*}

We conclude that for $Re(i\lambda)>0$ we have:

\begin{equation*}
sup_{0\leq r < 1}{(1-{r}^{2})}^{-\frac{2n+1-Re(i\lambda)}{2}}{[{\int}_{\partial B({\mathbb{H}}^{n})}{|({P}_{\lambda,l}f)(r\omega)|}^{\textbf{p}}d\omega]}^{1/\textbf{p}}\leq {\delta}_{l}(\lambda){\|f\|}_{{L}^{\textbf{p}}(\partial B({\mathbb{H}}^{n}))}<+\infty
\end{equation*}

Where: \begin{equation*}
{\delta}_{l}(\lambda):= \frac{\pi}{4}(2l+1)\frac{\Gamma(2n)\Gamma(Re(i\lambda))}{\Gamma(\frac{2n+1+Re(i\lambda)}{2}-l-1)\Gamma(\frac{2n+1+Re(i\lambda)}{2}+l)}
\end{equation*}
Now we turn to the proof of Theorem 2.3\\

Suppose that $ \textbf{p} =2 $ and Let \begin{equation*}
 F = {P}_{\lambda,l}(f),f \in {L}^{2}(\partial B({\mathbb{H}}^{n}).
\end{equation*}
Expanding $f$ into its $K$-type serie $f = {\sum}_{p,q\in \Omega }{{f}_{p,q}}$ we get:

\begin{equation*}
 F(ru) = {\sum}_{p,q \in \Omega} {\Phi}_{\lambda,l,p,q}(r){f}_{p,q}(u) in {\emph{C}}^{\infty}([0,1[\times \partial B({\mathbb{H}}^{n}) )
\end{equation*} 

Morever we have: ${\sum}_{p,q \in \Omega} {\mid{\Phi}_{\lambda,l,p,q}(r)\mid}^{2} { \parallel{{f}_{p,q}}\parallel}^{2}_{2} < \infty$ for all $ r \in [0,1[$\\

Then for every fixed $ r \in [0,1[ $, the series $ {\sum}_{p,q \in \Omega} {\Phi}_{\lambda,l,p,q}(r){f}_{p,q}(v) $ are uniformly convergents on $\partial B({\mathbb{H}}^{n}) $.\\

Lets now consider for each $ r \in [0,1[$ the following $\mathbb{C}$ valued function $ {g}_{r} $ on the boundary $ \partial B({\mathbb{H}}^{n}) ) $ given by:
\begin{equation*}
{g}_{r}(u)= {(1-{r}^{2})}^{-(2n+1-Re(i\lambda))}\int_{\partial B(\mathbb{H}^{n})}F(rv)  \overline{{P}_{\lambda,l}(ru,v)}dv
\end{equation*}
Then
\begin{equation*}
{g}_{r}(u)= {(1-{r}^{2})}^{-(2n+1-Re(i\lambda))}\int_{\partial B(\mathbb{H}^{n})}{\sum}_{p,q\in \Omega} {\Phi}_{\lambda,l,p,q}(r){f}_{p,q}(v)\overline{{P}_{\lambda,l}(ru,v)}dv
\end{equation*}
Since the series $ {\sum}_{p,q \in \Omega} {\Phi}_{\lambda,l,p,q}(r){f}_{p,q}(v) $ are uniformly convergents on $\partial B({\mathbb{H}}^{n}) )$ we get:\\
\begin{equation*}
{g}_{r}(u)= {(1-{r}^{2})}^{-(2n+1-Re(i\lambda))}{\sum}_{p,q\in \Omega }{\Phi}_{\lambda,l,p,q}(r)\int_{\partial B(\mathbb{H}^{n})}\overline{{P}_{\lambda,l}(ru,v)}{f}_{p,q}(v)dv
\end{equation*}

Hence:
\begin{equation*}
{g}_{r}(u)= {(1-{r}^{2})}^{-(2n+1-Re(i\lambda))}{\sum}_{p,q\in \Omega }{\mid{\Phi}_{\lambda,l,p,q}(r)\mid}^{2}{f}_{p,q}(u)
\end{equation*}
Then:

\begin{equation*}
{\Vert {{C}_{l}(\lambda)}^{-2} {g}_{r}-f \Vert}_{2}^{2}= {\sum}_{p,q\in \Omega}{[{{C}_{l}(\lambda)}^{-2}{(1-{r}^{2})}^{-(2n+1-Re(i\lambda))}{\mid{\Phi}_{\lambda,l,p,q}(r)\mid}^{2}-1]}^{2}{\Vert}{f}_{p,q}\Vert^{2}  
\end{equation*}
Then we conclude from the asymptotic behaviour of the generalized spherical functions $ {\Phi}_{\lambda,l,p,q}(r) $ that:
$ {lim}_{r\longrightarrow{1}^{-}}  {{\Vert {C}_{l}(\lambda))}^{-2} {g}_{r}-f \Vert}_{2}^{2} = 0 $ which gives the desired result.

\end{proof}

Now we turn to the proof of Theorem 2.2 for the case \textbf{p}=2;

\begin{proof}
The necessary condition is given by proposition 6.1.\\

For the sufficient condition let's now suppose that $F= {P}_{\lambda,l}f$ satisfies the Hardy's inequality:
\begin{equation*}
sup{(1-{r}^{2})}^{-\frac{2n+1-Re(i\lambda)}{2}}{[{\int}_{\partial B({\mathbb{H}}^{n})}{|({P}_{\lambda,l}f)(r\omega)|}^{2}d\omega]}^{1/2}<+\infty
\end{equation*}
Then we have to show that $f\in {{L}^{2}(\partial B({\mathbb{H}}^{n}))}$\\

Decomposing $ f $ to it's $ K $ type series $ f = {\sum}_{p,q \in \Omega}{f}_{p,q} $we can write in $ \mathcal{C}^{\infty}(\left[0,1 \right] \times \partial B({\mathbb{H}}^{n}) $

\begin{equation*}
{P}_{\lambda,l}f (ru)= \Sigma_{p,q \in \Omega} {\Phi}_{\lambda,l,p,q}(r){f}_{p,q}(u)
\end{equation*}\\
Morever we have for every fixed $ r \in \left[0,1 \right[ $ : 

\begin{equation*}
{(1-{r}^{2})}^{-\frac{2n+1-Re(i\lambda)}{2}}({\sum}_{p,q\in \Omega}{|{\Phi}_{\lambda,l,p,q}(r)|}^{2} {\|{f}_{p,q}\|}^{2}_{2})^{1/2}<+\infty
\end{equation*}

If we consider $\Omega_{0}$ un arbitrary finite subset of $\Omega$ then we have for every $ r \in \left[0,1 \right[ $:\\

\begin{equation*}
{(1-{r}^{2})}^{-\frac{2n+1-Re(i\lambda)}{2}}({\sum}_{p,q\in \Omega_{0}}{|{\Phi}_{\lambda,l,p,q}(r)|}^{2} {\|{f}_{p,q}\|}^{2}_{2})^{1/2}\leq {\|{P}_{\lambda,l}f\|}_{\lambda,\ast,2} <+\infty
\end{equation*}

And from the corollary giving the asymptotic behaviour of ${\Phi}_{\lambda,l,p,q}(r)$ we conclude:

\begin{align*}
{{C}_{l}(\lambda)}^{2}{\sum}_{p,q\in \Omega_{0} }{\|{f}_{p,q}\|}^{2}_{2} \leq sup{(1-{r}^{2})}^{-\frac{2n+1-Re(i\lambda)}{2}}{[{\int}_{\partial B({\mathbb{H}}^{n})}{|({P}_{\lambda,l}f)(r\omega)|}^{2}d\omega]}^{1/2}<+\infty
\end{align*}

\end{proof}

Now we are ready to give the proof of the main theorem 2.1 for all   $\textbf{p} \geq 2 $

\begin{proof}
The necessary condition follows from Proposition6.1\\

For the sufficient condition, let's consider $F$ a $\mathbb{C}$-valued function on $\partial B({\mathbb{H}}^{n})$ such that $ {\Vert F \Vert}_{\lambda,\textbf{p},\ast}< +\infty$.\\

Using the fact that ${\Vert F \Vert}_{\lambda,\textbf{p},\ast}< {\Vert F \Vert}_{\lambda,2,\ast}  $ then we conclude from the Theorem 2.2 that there exist a function $ f \in {L}^{2}(B({\mathbb{H}}^{n}))$ such that:\\
 $ F = {P}_{\lambda,l}(f) $.
 Furthermore, by Theorem 2.3 we have $ f(u) = {lim}_{r\longrightarrow{1}^{-}}{g}_{r}(u) $ where:
\begin{equation*}
 {g}_{r}(u)= {(C_{l}(\lambda))}^{-2} {(1-{r}^{2})}^{-(2n+1-Re(i\lambda))}\int_{\partial B(\mathbb{H}^{n})}F(rv)  \overline{{P}_{\lambda,l}(ru,v)}dv
 \end{equation*}

For every continuous function in $ \partial B(\mathbb{H}^{n}) $ we can write:
\begin{equation*}
{lim}_{r\longrightarrow{1}^{-}}{\int}_{\partial B({\mathbb{H}}^{n})}{g}_{r}(u) \overline{\Phi(u)}du = {\int}_{\partial B({\mathbb{H}}^{n})}f(u) \overline{\Phi(u)}du
\end{equation*}
Morever we have:
\begin{center}
\begin{multline*}
{\int}_{\partial B({\mathbb{H}}^{n})}{g}_{r}(u) \overline{\Phi(u)}du = {C_{l}(\lambda)}^{-2} {(1-{r}^{2})}^{-(2n+1-Re(i\lambda))}\\
\int_{\partial B(\mathbb{H}^{n}}(\int_{\partial B(\mathbb{H}^{n}}F(rv) \overline{{P}_{\lambda,l}(ru,v)}dv)\overline{\Phi(u)}du 
\end{multline*}
\end{center}
Then,

\begin{equation*}
{\int}_{\partial B({\mathbb{H}}^{n})}{g}_{r}(u) \overline{\Phi(u)}du = {C_{l}(\lambda)}^{-2} {(1-{r}^{2})}^
{-(2n+1-Re(i\lambda))}\int_{}\int_{\partial B(\mathbb{H}^{n}}  \overline{{P}_{\lambda,l}\Phi(rv)}F(rv)dv
\end{equation*}

And by using the Holder inequalty for $ \textbf{p} $ being such that $ 1/\textbf{p}+1/\textbf{q} = 1 $ we obtain:
\begin{center}
\begin{multline*}
|{\int}_{\partial B({\mathbb{H}}^{n})}{g}_{r}(u) \overline{\Phi(u)}du| \leq {C_{l}(\lambda)}^{-2}  {(1-{r}^{2})}^{-(2n+1-Re(i\lambda))}\\
{(\int_{\partial B(\mathbb{H}^{n}} { {|({P}_{\lambda,l}\Phi)(rv)|}^{\textbf{q}}}dv)}^{1/\textbf{q}}{\Vert F \Vert}_{\lambda,\textbf{p},\ast}
\end{multline*}
\end{center}
 
Then:

\begin{equation*}
|{\int}_{\partial B({\mathbb{H}}^{n})}{g}_{r}(u) \overline{\Phi(u)}du| \leq {C_{l}(\lambda)}^{-2}  {(1-{r}^{2})}^{-(2n+1-Re(i\lambda))} {\Vert({P}_{\lambda,l}\Phi)}\Vert _{\textbf{q}}{\Vert F \Vert}_{\lambda,\textbf{p},\ast}
\end{equation*}

But we have from proposition 4.1 and corollary 5.3: \\

$ \Phi(u) = C_{l}(\lambda)^{-1} lim_{r \rightarrow 1^{-}}{(1-{r}^{2})}^{(2n+1-(i\lambda)/2)}({P}_{\lambda,l}\Phi)(ru)  $  in  $ {L}^{\textbf{q}}(B({\mathbb{H}}^{n})) $\\

Hence, 
\begin{equation*}
{\Vert f \Vert}_{p} \leq {(C_{l}(\lambda))}^{-1}{\Vert \Phi\Vert}_{q}{\Vert F \Vert}_{\lambda,p,\ast}
\end{equation*}
We finally  conclude:
\begin{equation*}
C_{l}(\lambda)\Vert f \Vert_{p}  \leq {\Vert F \Vert}_{\lambda,p,\ast}
\end{equation*}

\end{proof}

\subsection*{Acknowledgment}
Many thanks to the work group in our mathematics department for fruitful discussion during the development of this work.

\end{document}